\newtheoremstyle{citedtheorem}%
  {3pt}
  {3pt}
  {\itshape}
  {}
  {\bfseries}
  {.}
  {.5em}
  {\thmname{#1} \thmnumber{#2} \thmnote{\normalfont#3}}
\newtheoremstyle{citeddefinition}%
  {3pt}
  {3pt}
  {\normalfont}
  {}
  {\bfseries}
  {.}
  {.5em}
  {\thmname{#1} \thmnumber{#2} \thmnote{\normalfont#3}}
\newtheoremstyle{citedexample}%
  {3pt}
  {3pt}
  {\normalfont}
  {}
  {\itshape}
  {.}
  {.5em}
  {\thmname{#1} \thmnumber{#2} \thmnote{\normalfont#3}}
\theoremstyle{plain}
\newtheorem{theorem}{Theorem}[section]
\newtheorem{lemma}[theorem]{Lemma}
\newtheorem{corollary}[theorem]{Corollary}
\newtheorem{definition}[theorem]{Definition}
\theoremstyle{citedtheorem}
\theoremstyle{definition}
\theoremstyle{citeddefinition}
\theoremstyle{remark}
\newtheorem{remark}[theorem]{Remark}
\theoremstyle{citedexample}
\newcommand{\cE}{\mathcal{E}}
\newcommand{\cL}{\mathcal{L}}
\newcommand{\fF}{\mathfrak{F}}
\newcommand{\fg}{\mathfrak{g}}
\newcommand{\fu}{\mathfrak{u}}
\newcommand{\rD}{\mathrm{D}}
\newcommand{\sD}{\mathscr{D}}
\newcommand{\sE}{\mathscr{E}}
\newcommand{\sG}{\mathscr{G}}
\newcommand{\sS}{\mathscr{S}}
\renewcommand{\a}{\alpha} 
\renewcommand{\b}{\beta} 
\newcommand{\g}{\gamma}
\newcommand{\n}{\nu}
\newcommand{\x}{\xi}
\newcommand{\vp}{\varphi}
\renewcommand{\l}{\ell} 
\newcommand{\N}{\mathbb{N}}
\newcommand{\Z}{\mathbb{Z}}
\newcommand{\R}{\mathbb{R}}
\newcommand{\C}{\mathbb{C}}
\renewcommand{\d}{\partial} 
\newcommand{\sub}{\subseteq}
\newcommand{\8}{\infty}
\newcommand{\<}{\left\langle}
\renewcommand{\>}{\right\rangle}
\DeclareMathOperator{\ad}{ad}
\DeclareMathOperator{\End}{End}
\DeclareMathOperator{\id}{id}
\DeclareMathOperator{\rank}{rank}
\DeclareMathOperator{\Tr}{Tr}
\DeclareMathOperator{\Cl}{C\l}
\DeclareMathOperator{\Spec}{Spec}
\DeclareMathOperator{\supp}{supp}
\DeclareMathOperator{\Dir}{\slashed{\rD}}
\DeclareMathOperator{\Spin}{Spin}
\title{A Uniqueness Result for the Calder\'{o}n Problem for $U(N)$-connections coupled to spinors}
\author{Carlos Valero}
\begin{document}

\maketitle

\begin{abstract}
    In this paper we define a Dirichlet-to-Neumann map for a twisted Dirac Laplacian acting on bundle-valued spinors over a spin manifold. We show that this map is a pseudodifferential operator of order $1$ whose symbol determines the Taylor series of the metric and connection at the boundary. We go on to show that if two real-analytic connections couple to a spinor via the Yang--Mills--Dirac equations with appropriate boundary conditions, and have equal Dirichlet-to-Neumann maps, then the two connections are locally gauge equivalent. In the abelian case, the connections are globally gauge equivalent.
\end{abstract}

\tableofcontents

\newpage

\section{Introduction}\label{sec1}

In this paper, we consider a Calder\'{o}n inverse problem for unitary connections on Hermitian vector bundles over spin manifolds that couple to spinor fields via the Yang--Mills--Dirac system. In particular, we consider the Dirichlet-to-Neumann map for the twisted Dirac Laplacian acting on vector-valued spinors, and investigate how the introduction of the spin structure affects the recovery of the metric and connection from boundary data.

The Calder\'{o}n problem has its origin in the physical question of whether one can determine the conductivity of a medium by making measurements on the boundary of potential functions and the induced currents. Geometrically, this corresponds to the question of whether one can determine the metric on a manifold with boundary, up to isometry, from knowledge of its Dirichlet-to-Neumann map, which sends a function on the boundary to the normal derivative of its harmonic extension. Much work has since been done on the Calder\'{o}n problem for the scalar Laplacian; we refer the reader to \cite{Uhlmann2009}, or \S 1 of the more recent \cite{DKN2018}, for a survey of uniqueness results in the literature. 

Many natural extensions of this problem arise when one considers other important second-order elliptic operators induced by some geometric structure. For example, one may fix a vector bundle $E$ over a Riemannian manifold $(M,g)$ with boundary, and consider a connection $\nabla$ on this vector bundle. One may then ask to what extent the Dirichlet-to-Neumann map for the connection Laplacian $\nabla^* \nabla$ determines the connection, up to gauge equivalence. This problem has been studied in a few recent works, in which a number of uniqueness results are proved. In \cite{Kurylev2018}, a Riemannian manifold, Hermitian vector bundle, and connection are reconstructed from the hyperbolic Dirichlet-to-Neumann map associated to the wave equation of the connection Laplacian. In \cite{Ceki2017}, the elliptic Dirichlet-to-Neumann map is considered, and it is shown using methods of complex geometrical optics that the Dirichlet-to-Neumann map for a connection Laplacian determines the connection up to gauge for a class of vector bundles over special Riemannian manifolds, namely conformally transversally anisotropic manifolds with injective ray transform. Using new methods of geometric analysis and Runge approximation, Ceki\'{c} shows in \cite{Ceki2020} that a Hermitian vector bundle and Yang--Mills connection can be recovered, up to gauge transformations, from the Dirichlet-to-Neumann map of its connection Laplacian. Finally, in the recent preprint \cite{Gabdurakhmanov2021}, the authors reconstruct a Euclidean vector bundle and connection from the connection Laplacian Dirichlet-to-Neumann map when all of the data is real-analytic and the dimension of $M$ is at least $3$.


On the other hand, inverse boundary problems for first-order Dirac operators have also been studied in the literature. One important uniqueness result is due to Kurylev and Lassas \cite{Kurylev2009} who showed that a Riemannian manifold and super-vector bundle can be recovered from the spectrum and eigenfunctions of the corresponding Dirac operator on the boundary. There are also many works that consider the problem of recovering magnetic potentials from boundary data corresponding to first-order Dirac operators. Much in the spirit of the present paper, Salo and Tzou \cite{Salo2008, Salo2010} have shown, using the method of limiting Carleman weights, that a potential and magnetic field can be recovered from the Cauchy data of an associated Dirac equation over a compact domain in $\R^n$. In the terminology used in the present paper, the magnetic field corresponds to the curvature of an abelian connection.

In this paper, we consider the question of determining a $U(N)$-connection $A$ up to gauge equivalence, from the Dirichlet-to-Neumann map of the twisted Dirac Laplacian $\Dir_A^2$. More precisely, we consider a compact spin manifold $M$ with boundary $\d M$, and a Hermitian vector bundle $E$ over $M$. Then the Dirac operator $\Dir$ of $M$ can be defined, acting on its bundle of complex spinors $S$. Now given any connection $A$ on $E$, we may endow the bundle $S \otimes E$ with the connection $\omega^{\mathrm{s}} \otimes A$, where $\omega^{\mathrm{s}}$ is the spin connection on $S$, induced by the Levi-Civita connection. With this connection, we may define a twisted, or covariant, Dirac operator $\Dir_A$, which like $\Dir$ is a first-order, elliptic, and self-adjoint operator acting on sections of $S \otimes E$.

In order for the Dirichlet problem to be well-defined, we consider the square of this operator, $\Dir_A^2$. The Dirichlet-to-Neumann map associated to $\Dir_A^2$ can be thus defined by sending any section $\chi$ of $S \otimes E|_{\d M}$ to the covariant normal derivative of its harmonic extension with respect to $\Dir_A^2$. It is straightforward to generalize this to include non-zero mass terms and zeroth-order potentials, provided that the Dirichlet-problem is well-defined. We summarize with the following definition:

\begin{definition}\label{DN map for spinors}
Let $M$ be an $n$-dimensional compact spin manifold with boundary $\d M$, and let $g$ be a Riemannian metric on $M$. Let $S$ be the spinor bundle associated to some fixed spin structure on $M$, and let $E$ be a Hermitian bundle of rank $N$ on $M$. Let $A$ denote a $U(N)$-connection on $E$, and let $Z$ be an endomorphism of $S \otimes E$. For any $m \in \R$ such that $m^2 \notin \Spec{\left( \Dir_A^2 + Z \right)}$, we define the \em{Dirichlet-to-Neumann map}
\begin{equation}
    \Lambda_{g, A, Z, m} : C^\8\left( S \otimes E|_{\d M} \right) \to C^\8\left( S \otimes E|_{\d M} \right)
\end{equation}
as follows. For $\chi \in C^\8\left( S \otimes E|_{\d M} \right)$, we may solve the Dirichlet problem
\begin{equation}\label{Dirichlet problem for chi}
    \begin{cases}
    \Dir_A^2 \vp + Z \vp - m^2 \vp = 0, \\
    \vp|_{\d M} = \chi,
    \end{cases}
\end{equation}
to obtain a unique solution $\vp \in C^\8(S \otimes E)$. We then define $\Lambda_{g,A,Z,m}(\chi) := \nabla^A_\n \vp \big|_{\d M}$ where $\n$ is the inward unit normal to $\d M$.
\end{definition}

We will often suppress subscripts on the Dirichlet-to-Neumann map that are understood to be fixed, and indicate only the relevant ones. For example, when we recover the connection with the background metric, mass parameter, and endomorphism fixed in Section \ref{section: recovering connections}, the Dirichlet-to-Neumann map is simply denoted $\Lambda_A$.

\begin{remark}\label{extension of DN map to sobolev}
The Dirichlet-to-Neumann map can be extended to a map
\begin{equation}
    \Lambda_{g, A, Z, m} : H^{\frac{1}{2}}\left( S \otimes E|_{\d M} \right) \to H^{-\frac{1}{2}}\left( S \otimes E |_{\d M} \right),
\end{equation}
where for any vector bundle $\cE$ and $s \in \R$, $H^s(\cE)$ denotes the Hilbert space of $s$-Sobolev sections of $\cE$. That is, $H^s(\cE)$ denotes the space of distributional sections of $\cE$ that are represented by a tuple of $H^s$ functions in any smooth local trivialization. Indeed, there is a natural weak formulation of Definition \ref{DN map for spinors} if we introduce a modified Dirichlet-to-Neumann map $\hat{\Lambda}_{g,A,Z,m}$ as follows: for any $\chi \in H^{\frac{1}{2}}\left(S \otimes E|_{\d M}\right)$, we can again solve \eqref{Dirichlet problem for chi} to obtain a unique $\vp \in H^{1}(S \otimes E)$. We then define $\hat{\Lambda}_{g,A,Z,m}(\chi) \in H^{-\frac{1}{2}}(S \otimes E|_{\d M})$ by the property that
\begin{equation}
    \< \hat{\Lambda}_{g,A,Z,m}(\chi), \zeta \> = \int_M \< \Dir_A \vp, \Dir_A \psi \> - \int_M \< \left( Z - m^2 \right) \vp, \psi  \>
\end{equation}
holds for all $\zeta \in H^{\frac{1}{2}}(S \otimes E|_{\d M})$, where $\psi \in H^{1}(S \otimes E)$ is any extension of $\zeta$, which exists by the standard trace theorems for Sobolev spaces \cite[\S 11]{booss1993}. In the smooth case, the Green's formula for the Dirac operator \cite[Equation 5.7]{LawsonMichelsohn}
\begin{equation}
    \int_M \< \Dir_A \sigma_1, \sigma_2 \> = \int_M \< \sigma_1, \Dir_A \sigma_2 \> -\int_{\d M} \< \nu \cdot \sigma_1, \sigma_2  \>
\end{equation}
implies that $\hat{\Lambda}_{g,A,Z,m}(\chi) = -\gamma(\nu) \Dir_A \vp |_{\d M}$, where $\gamma(\nu)$ denotes Clifford multiplication by $\nu$, and so differs from $\Lambda_{g,A,Z,m}(\chi)$ as given in Definition \ref{DN map for spinors} by tangential derivatives. In particular, we may say that $\hat{\Lambda}_{g,A,Z,m}$ and $\Lambda_{g,A,Z,m}$ contain the same information about the geometric data, such as the metric and connection. In this paper, we thus are free to restrict ourselves to considering $\Lambda_{g,A,Z,m}$. 
\end{remark}

We note that there are some natural gauge invariances that arise from Definition \ref{DN map for spinors}, which shall be explored in greater detail in Section \ref{section: preliminaries}. The first is the gauge-invariance of the connection $A$. Recall that two connections $A$ and $A'$ are called gauge equivalent if there exists a unitary automorphism $G$ of $E$, otherwise called a gauge transformation, such that their covariant derivatives are related by
\begin{equation}\label{A' A gauge equivalent}
    \nabla^{A'} = G^{-1} \circ \nabla^A \circ G.
\end{equation}
We say that $A$ and $A'$ are locally gauge equivalent about a point $x \in M$ if there is an open neighbourhood $U$ of $x$ such that the restrictions of $A$ and $A'$ to $U$ are gauge equivalent.

It is easy to see that if there exists a $G$ as in \eqref{A' A gauge equivalent} with $G|_{\d M} = \id$, then $\Lambda_{A'} = \Lambda_{A}$. Indeed, if $\vp$ is the solution to \eqref{Dirichlet problem for chi} for $A$, then $\left(\id_S \otimes G \right)^{-1} \vp$ is the solution to \eqref{Dirichlet problem for chi} with $A$ replaced by $A'$. Therefore, suppressing $\id_S$, we have
\begin{equation}
    \Lambda_{A'}(\chi) = \nabla_{\n}^{A'} \left( G^{-1} \vp \right) \big|_{\d M} = G^{-1} \nabla_{\n}^A \vp \big|_{\d M} = \Lambda_{A}(\chi).
\end{equation}
Thus, given $\Lambda_{A} = \Lambda_{A'}$, we can only every recover the connection up to a gauge transformation that is equal to the identity on the boundary.

Going further, we would like to say that the definition \ref{DN map for spinors}, inasmuch as it depends on the geometry of the metric $g$, depends only on the isometry class of $g$. However, since spin structures, and hence spinor bundles, are defined with respect to a fixed metric, we must take care in relating the Dirichlet-to-Neumann maps of two different metrics. In Section \ref{section: preliminaries}, we explain how a diffeomorphism $\Phi : M \to M$ induces an isomorphism of associated spinor bundles $\tilde{\Phi} : S_{\Phi^*g} \to S_g$. Then it is easy to prove that the Dirichlet-to-Neumann map is diffeomorphism-invariant:

\begin{lemma}\label{diffeomorphism invariance of DN map}
Let $\Phi : M \to M$ be a diffeomorphism such that $\Phi|_{\d M} = \id$. Then 
\begin{equation}\label{equality of DN maps for isometric metrics}
    \Lambda_{\Phi^*g} = \tilde{\Phi}^{-1}|_{\d M} \circ \Lambda_g \circ \tilde{\Phi}|_{\d M}.
\end{equation}
\end{lemma}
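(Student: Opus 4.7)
The plan is to reduce the identity to naturality of the twisted Dirac operator under isometries. By construction $\Phi : (M, \Phi^*g) \to (M, g)$ is an isometry, and the lift $\tilde{\Phi} : S_{\Phi^*g} \to S_g$, together with the pullback identification on $E$ along $\Phi$, should intertwine the twisted Dirac operators, so that
\begin{equation}
\tilde{\Phi} \circ \Dir_{\Phi^*A} = \Dir_A \circ \tilde{\Phi},
\end{equation}
where the left-hand Dirac operator is taken with respect to the metric $\Phi^*g$ and the right-hand one with respect to $g$. I would verify this by trivializing over a spin orthonormal frame and invoking the fact that the Levi--Civita connection, the spin connection, and Clifford multiplication are all natural under isometric changes of frame, while the pullback of a unitary connection on $E$ is an isometric identification by definition. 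Squaring and adding $Z - m^2$, with $Z$ understood to be pulled back as part of the suppressed data on the left, yields the corresponding intertwining for $\Dir_A^2 + Z - m^2$.

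Next I would transport Dirichlet solutions along $\tilde{\Phi}$. Given $\chi \in C^\8(S_{\Phi^*g} \otimes E|_{\d M})$, let $\vp$ be the unique solution of \eqref{Dirichlet problem for chi} on $(M,g)$ with boundary data $\tilde{\Phi}|_{\d M}(\chi)$; by definition $\Lambda_g \circ \tilde{\Phi}|_{\d M}(\chi) = \nabla^A_\n \vp|_{\d M}$. The intertwining relation together with $\tilde{\Phi}$ being an isomorphism implies that $\tilde{\Phi}^{-1}\vp$ is annihilated by the pulled-back operator $\Dir_{\Phi^*A}^2 + \Phi^*Z - m^2$ and has boundary value $\chi$; by uniqueness of the Dirichlet problem (ensured by the spectral hypothesis on $m^2$) it is therefore the solution used to define $\Lambda_{\Phi^*g}(\chi)$.

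To conclude I would match normal derivatives at $\d M$. Since $\Phi|_{\d M} = \id$, each boundary point is fixed and $d\Phi$ restricts to the identity on $T\d M$; a one-line computation using that $\Phi$ is an isometry from $\Phi^*g$ to $g$ then shows that $d\Phi$ sends the inward $\Phi^*g$-unit normal $\n'$ to the inward $g$-unit normal $\n$. Combined with the covariance $\tilde{\Phi}^{-1} \circ \nabla^A \circ \tilde{\Phi} = \nabla^{\Phi^*A}$, part of the same naturality package, this gives
\begin{equation}
\tilde{\Phi}^{-1}(\nabla^A_\n \vp)|_{\d M} = \nabla^{\Phi^*A}_{\n'}(\tilde{\Phi}^{-1}\vp)|_{\d M} = \Lambda_{\Phi^*g}(\chi),
\end{equation}
completing the identification. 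The main obstacle is bookkeeping rather than depth: one has to pin down the definition of $\tilde{\Phi}$ on the spin principal bundle (a choice of lift that must be fixed in Section \ref{section: preliminaries}) and clarify that the suppressed data on both sides of \eqref{equality of DN maps for isometric metrics} are to be pulled back along $\Phi$. Once these conventions are in place, each step is a routine unwinding of definitions.
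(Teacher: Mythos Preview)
The paper does not actually supply a proof of this lemma; it merely calls the statement ``easy to prove'' and moves on. Your proposal is correct and is precisely the natural argument one would expect: naturality of the Dirac operator under the isometry $\Phi : (M,\Phi^*g) \to (M,g)$, transport of the Dirichlet solution along $\tilde{\Phi}$, and matching of the inward unit normals using $\Phi|_{\d M} = \id$. The only cautions you raise yourself---fixing the lift to the spin principal bundle and making explicit that $A$, $Z$ are pulled back on the $\Phi^*g$ side---are exactly the bookkeeping points the paper sweeps under the rug, and they cause no difficulty once stated. In short, your outline is the proof the paper omits.
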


In this paper, we want to consider a Calder\'{o}n Problem for a $U(N)$-connection $A$, which couples to an $E$-valued spinor $\phi$ through some natural equations arising from physics. To this end, recall that given a connection $A$, one may define its curvature $F_A$. If $P$ denotes the principal $U(N)$-bundle of unitary frames of $E$, then $F_A$ is an $\ad{P}$-valued $2$-form. In physics, the curvature of a connection corresponds to a force field; if the connection is abelian then its curvature is the electromagnetic field. Now let $m \in \R$ be such that $m^2$ is not in the Dirichlet spectrum of $\Dir_A^2$. We then assume that there exists an $E$-valued spinor $\phi$ such that $(A, \phi)$ satisfies the following second-order Yang--Mills--Dirac system:
\begin{equation}
    \begin{cases}
            \Dir_A^2 \phi = m^2 \phi, \label{Yang mills dirac spinor 2nd order} \\
            d_A^* F_A = J(\phi),
    \end{cases}
\end{equation}
where the current $J(\phi)$ is an $\ad{P}$-valued $1$-form depending quadratically o $\phi$. The system \eqref{Yang mills dirac spinor 2nd order} arises when a gauge field interacts with fermionic matter, as represented by the spinor field $\phi$. In this paper we prove the following result:

\begin{theorem}\label{main theorem}
Let $(M,g)$ be a compact $n$-dimensional real-analytic spin manifold with boundary, and let $E$ be a real-analytic Hermitian vector bundle over $M$. Let $A$ and $B$ be $U(N)$-connections on $E$, real-analytic in the interior of $M$, and let $\phi$ and $\psi$ be $E$-valued spinors on $M$ such that $(A, \vp)$ and $(B,\psi)$ both satisfy the second-order Yang--Mills--Dirac system \eqref{Yang mills dirac spinor 2nd order}, and such that $\vp|_{\d M} = \psi|_{\d M}$. If $\Lambda_{A,m} = \Lambda_{B,m}$, then $A$ and $B$ are locally gauge equivalent about every point in $M$. If $N = 1$, then $A$ and $B$ are globally gauge equivalent.
\end{theorem}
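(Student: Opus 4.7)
The plan is to combine the boundary determination result announced in the abstract (recovering the Taylor series of the connection at $\partial M$ from the symbol of $\Lambda_{A,m}$) with a real-analytic unique continuation argument powered by the Yang--Mills--Dirac structure, followed by a cohomological step to pass from local to global in the abelian case.

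\textbf{Boundary alignment.} First, I would invoke the earlier boundary determination theorem to conclude that, since $\Lambda_{A,m} = \Lambda_{B,m}$ and $g$ is fixed, there exists a unitary gauge transformation $G$ of $E$ with $G|_{\partial M} = \id$ such that, after replacing $B$ by $G^{-1} \cdot B \cdot G$, the Taylor series of $A$ and $B$ at $\partial M$ coincide (in a common boundary normal gauge). Since $A,B$ are real-analytic in $M^\circ$, $G$ can be chosen real-analytic there as well. By \cref{diffeomorphism invariance of DN map} the DN map equality is unaffected by the identity-boundary gauge transformation. Next, I apply $\Lambda_{A,m} = \Lambda_{B,m}$ to the common Dirichlet datum $\chi := \phi|_{\partial M} = \psi|_{\partial M}$ to obtain $\nabla^A_\nu \phi|_{\partial M} = \nabla^B_\nu \psi|_{\partial M}$, and then iteratively use the twisted Dirac equation $\Dir_A^2 \phi = m^2 \phi$ to express each higher normal derivative $\nabla_\nu^k \phi|_{\partial M}$ in terms of tangential derivatives of $\chi$ and boundary jets of the connection. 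Because those jets agree, $\phi$ and $\psi$ also agree to infinite order at $\partial M$.

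\textbf{Interior propagation.} The heart of the argument is a real-analytic unique continuation. Choose a simultaneous gauge-fixing (a Coulomb-type condition relative to a reference connection works) in which the second-order Yang--Mills equation $d_A^* F_A = J(\phi)$ together with $\Dir_A^2 \phi = m^2 \phi$ becomes a nonlinear elliptic system for the pair $(A, \phi)$. Both $(A,\phi)$ and $(B,\psi)$ are real-analytic solutions in $M^\circ$ whose jets agree to infinite order at $\partial M$. Working in the slightly larger real-analytic manifold into which $M$ embeds, one extends the data across $\partial M$ and applies a Holmgren-type uniqueness theorem for the elliptic difference equation satisfied by $(A-B, \phi-\psi)$; this forces $A = B$ and $\phi = \psi$ in a collar neighbourhood of $\partial M$. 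Finally, since $A$ and $B$ are real-analytic in the connected open set $M^\circ$ and coincide on an open subset, the identity principle gives $A = B$ everywhere, yielding local gauge equivalence around every point through the transformation $G$.

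\textbf{The global abelian case and main obstacle.} For $N = 1$, the gauge group is abelian, so local gauge transformations $G_\alpha$ relating $A$ and $B$ on a good cover $\{U_\alpha\}$ satisfy $G_\alpha G_\beta^{-1}$ commuting with all operations; more precisely the ratio on overlaps defines a locally constant $U(1)$-valued \v{C}ech cochain that can be trivialized against the fixed boundary normalization $G|_{\partial M} = \id$, yielding a global gauge. In the non-abelian case no such abelian patching is available. The step I expect to be the most delicate is the unique continuation: the connections are only assumed real-analytic in the \emph{interior}, so one must be careful about boundary regularity when appealing to Holmgren. One route is to first establish real-analyticity up to $\partial M$ via elliptic boundary regularity for the gauge-fixed YMD system (using that $\partial M$ and $g$ are real-analytic), and another is to bypass the boundary by starting the continuation from a slightly interior non-characteristic hypersurface on which infinite-order agreement has already been transported by finite Taylor propagation. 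Ensuring that the chosen gauge-fixing is simultaneously applicable to both $(A, \phi)$ and $(B, \psi)$, and that the resulting elliptic system has the analytic coefficients required by Holmgren, is the key technical point of the argument.
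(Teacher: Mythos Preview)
Your near-boundary outline is essentially the paper's: normal gauge plus boundary determination to match Taylor series, then a Coulomb-type gauge to make the Yang--Mills--Dirac system elliptic, then unique continuation. The paper implements the Coulomb step concretely via Cauchy--Kovalevskaya, solving $d^*(e^{-S}Ae^S+e^{-S}d(e^S))=0$ with $S|_{\partial M}=\partial_n S|_{\partial M}=0$, and checks that the resulting $S$ and $T$ (for $A$ and $B$) have equal boundary jets; you flag this as ``the key technical point'' but do not say how to do it, and that mechanism is exactly what controls the boundary behaviour you need.

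The real gap is your interior step. Unique continuation only gives $A'=B'$ in the \emph{local} Coulomb gauge near $\partial M$; it does not give $A=B$ for the globally defined (normally-gauged) connections, because the gauge transformations $e^S$ and $e^{-T}$ are only defined on the CK collar. So you cannot invoke the identity principle for the real-analytic objects $A,B$ on $M^\circ$: what agrees on an open set is $A'$ and $B'$, and those are not globally defined. If instead you intend a \emph{global} Coulomb gauge, you owe an existence and real-analyticity argument (and in the non-abelian case this runs into Gribov-type obstructions); note too that your scheme, if it worked, would yield \emph{global} gauge equivalence for all $N$, which is stronger than the theorem asserts and should make you suspicious. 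The paper handles the interior by a completely different mechanism: it builds, along any path from an interior point to the collar, a real-analytic frame of $\Dir^2$-harmonic sections for $A$ (via a Runge-type lemma of Ceki\'c), solves the matching Dirichlet problem for $B$, and forms $H_0=\sum_j \psi_j\otimes\varphi_j^\flat$; a partial trace over $S$ gives a unitary $H$ which equals the collar gauge near $\partial M$ and, by real-analytic continuation of the \emph{relation} $B=H^{-1}AH+H^{-1}dH$, extends the gauge equivalence along the tube. For $N=1$, the paper does not use a \v{C}ech argument (your cocycle lies in $H^1(M;U(1))$, which need not vanish); instead it exploits that the local gauges also intertwine the spinors, $G_\alpha\phi=\psi$, and that elliptic solutions are nonzero on a dense set, forcing $G_\alpha=G_\beta$ on overlaps.
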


In other words, if a connection $A$ couples to an auxiliary $E$-valued spinor via the second-order Yang--Mills--Dirac system, then by making boundary measurements $\Lambda_A(\chi)$ of other spinor fields, one can determine the curvature of the connection at any point up to conjugation. In the abelian case, one can determine the connection up to gauge.

\section{Preliminaries}\label{section: preliminaries}

In this section, we recall some of the basic tools and constructions that we need to state the central problem and prove the main result. We review the construction of spinor bundles and the Dirac operator in Section \ref{subsection: spin geo}, and extend this to bundle-valued spinors in Section \ref{subsection: connections}. Lastly, we review a few key results from the theory of pseudodifferential operators in \ref{subsection: pseudos}.

\subsection{Spinors and the Dirac Operator}\label{subsection: spin geo}

In this subsection, we briefly recall some basic notions of spin geometry, such as Clifford algebras, Spin manifolds, the construction of the spinor bundle on a Spin manifold, and the definition and fundamental properties of Dirac operators. The important results for our purposes are the local formulae \eqref{spin connection local} for the spin connection, and \eqref{Dirac operator defn} for the Dirac operator. For details on the results presented in this section, we refer the reader to \cite{LawsonMichelsohn}.

\begin{definition}\label{Clifford algebra defn}
The {\em Clifford algebra} $\Cl_n$ is the real associative algebra generated by vectors in $\R^n$, with a product $\cdot$ satisfying the relation
\begin{equation}\label{clifford algebra relation}
   v \cdot w + w \cdot v = -2 \< v,w \> 
\end{equation}
for all $v,w \in \R^n$, where $\< \cdot, \cdot \>$ denotes the standard inner product on $\R^n$. 
\end{definition}

Note that we have a natural inclusion $\R^n \sub \Cl_n$, which can be extended to a vector space isomorphism $\Lambda^*\R^n \to \Cl_n$ by fixing an orthonormal basis $\{ e_i \}_{i=1}^n$ of $\R^n$ and sending
\begin{equation}
    e_{i_1} \wedge \cdots \wedge e_{i_k} \mapsto e_1 \cdots e_k.
\end{equation}
In particular, we have $\dim{\Cl_n} = 2^n$. 

Clifford algebras play an important role throughout geometry and physics. One reason for this is that $\Cl_n$ naturally contains the spin group $\Spin(n)$, which can be defined as a double cover of the rotation group $SO(n)$. That is, there exists a surjective Lie group homomorphism $\lambda : \Spin(n) \to SO(n)$ such that $\ker{\lambda} = \{1,-1\}$. Thus, $\lambda$ is a 2-sheeted covering map.

\begin{remark}
It follows from the preceding that $\Spin(n)$ is in fact the universal cover of $SO(n)$ for $n \geq 3$, since $\pi_1(SO(n)) = \Z_2$ for $n \geq 3$.
\end{remark}

\begin{definition}
Let $(M,g)$ be an oriented $n$-dimensional Riemannian manifold {\em without} boundary. A {\em spin structure} $P_{\Spin}$ on $M$ is a principal $\Spin(n)$-bundle, which covers the $SO(n)$-bundle of orthonormal frames of $M$, in such a way that the covering map is compatible with the $2$-fold covering map $\lambda : \Spin(n) \to SO(n)$. When such a bundle $P_{\Spin}$ exists, we say $M$ is {\em spin}.
\end{definition}

\begin{remark}
Not every oriented Riemannian manifold is spin. In fact, the obstruction to being spin is entirely contained in the second Stiefel-Whitney class of $M$. Even when $M$ is spin, the spin structure need not be unique. See \cite{LawsonMichelsohn} for details. 
\end{remark}

A spin manifold is endowed with a distinguished complex vector bundle $S$ called the {\em spinor bundle}. It is defined as follows. First, fix a spin structure $P_{\Spin}$ on $M$. Let $\rho : \Cl_n \to \End{\C^k}$ be an irreducible $\Cl_n$-module. It follows from the structure of $\Cl_n$ that $k = \left[ \frac{n}{2} \right]$. Then $\rho$ restricts to a representation of $\Spin(n)$, called the {\em spinor representation}. The spinor bundle $S$ is then the bundle associated to $P_{\Spin}$ and the spinor representation, $S := P_{\Spin} \times_{\rho} \C^k$. The sections of $S$ are called (complex) {\em spinors}. One can also define real spinors but they shall not concern us here.

\begin{remark}
The spinor representations have the property that $-1 \in \Spin(n)$ acts non-trivially. Since $\lambda(-1) = 1$, where $\lambda : \Spin(n) \to SO(n)$ is the $2$-fold covering map described above, it follows that the spinor representations do not descend to representations of $SO(n)$. Therefore, the spinor representations are in some sense the simplest representations that do not correspond to representations of $SO(n)$.  
\end{remark}

Since the typical fibre of $S$ is a $\Cl_n$-module, it follows that $S$ is bundle of modules over $\Cl(M)$, the Clifford bundle of $M$, which is defined by $\Cl(M)_x := \Cl(T_xM)$. In particular, since $TM \sub \Cl(M)$, we have a map $\gamma : TM \to \End{S}$, called {\em Clifford multiplication}. It is possible to endow $S$ with a Hermitian metric such that $\g(e)$ is skew-symmetric for all unit vectors $e \in TM$. We assume $S$ has such a metric henceforward.

So far we have presented these definitions for a spin manifold without boundary. A spin manifold with boundary $M$ is defined to be any closed domain of a spin manifold $N$, whose spin structure is the restriction of the spin structure on $N$. All of the above definitions then extend to this setting.

Now, the Levi-Civita connection $\omega$ on the oriented orthonormal frame bundle of $M$ lifts to a connection $\omega^{\mathrm{s}}$ on any given spin structure, which we call the {\em spin connection}. This connection induces a covariant derivative $\nabla^{\mathrm{s}}$ on sections of $S$, which can be explicitly described with respect to a local trivialization as follows.

Let $(e_i)_i$ be a local orthonormal frame for $M$, and let $\omega^i_{\ j}$ be the Levi-Civita connection $1$-form with respect to this frame, so that
\begin{equation}
    \nabla_X e_j = \omega^i_{\ j}(X) e_i.
\end{equation}
This frame can be lifted to a local section of the spin structure $P_{\Spin}$, which we can regard as a local frame $(\sigma_\a)_\a$ for $S$ (although there is another lifted frame, namely $(-\sigma_\a)_\a$, the choice of lift is immaterial here). Then, with respect to this frame of spinors, the spin connection takes the form
\begin{equation}\label{spin connection local}
    \nabla^\mathrm{s} \sigma_\a = -\frac{1}{2} \sum_{i < j} \omega^{i}_{\ j} \otimes \g(e_i) \g(e_j) \sigma_\a.
\end{equation}

The structure of a Clifford module allows us to define the Dirac operator $\Dir$ on sections of $S$ as follows. For $\vp \in \Gamma(S)$, let $(e_i)_i$ be any orthonormal frame in an open set $U$. Then, in $U$,
\begin{equation}\label{Dirac operator defn}
     \Dir \vp :=  \sum_{i=1}^n \gamma(e_i) \nabla^s_{e_i} \vp.
\end{equation}
This definition does not depend on the choice of orthonormal frame. The Dirac operator plays an crucial role in physics, where it occurs in the equations of motion for spinor fields, which represent fermionic matter. The square of the Dirac operator satisfies the famous Lichnerowicz formula,
\begin{equation}\label{lichnerowicz}
    \Dir^2 \vp = \left( \nabla^\mathrm{s} \right)^* \nabla^{\mathrm{s}} \vp + \frac{1}{4} R \vp
\end{equation}
where $\left( \nabla^s \right)^*$ denotes the formal adjoint of $\nabla^A$ with respect to the $L^2$-inner product, and $R$ is the scalar curvature of $g$. Thus one can transfer questions about the Dirac Laplacian $\Dir^2$ to questions about the spin connection Laplacian at the expense of a curvature term.

\subsection{$U(N)$-connections and bundle-valued spinors}\label{subsection: connections}

We now want to introduce an auxiliary Hermitian vector bundle $(E,h)$ equipped with a connection $A$, whose curvature $F_A$ corresponds to some physical force. Moreover, we want to introduce a mechanism to couple this connection to spinor fields, corresponding to the physical interaction between the force field $F_A$ and fermions. If the connection is abelian, then its curvature is the familiar electromagnetic field.

Let $(E,h)$ be a Hermitian vector bundle of rank $N$, which is associated to its bundle of complex orthonormal frames $P$, which is a principal $U(N)$-bundle. Consider the bundle $S \otimes E$, the bundle of $E$-valued spinors, which is associated to the principal bundle $P_{\Spin} \times_M P$. Given a $U(N)$-connection $A$ on $P$, we can equip $S \otimes E$ with the connection $\omega^{\text{s}} \otimes A$. The corresponding covariant derivative on sections of $S \otimes E$ is denoted by $\nabla^A$. Thus, with respect to a local trivialization of $E$, an $E$-valued spinor $\psi$ is given by a tuple of $N$ complex spinors, and 
\begin{equation}\label{covariant derivative of spinor}
    \left( \nabla_X^A \psi \right)^a = \nabla_X^\mathrm{s} \psi^a + A^a_{\ b}(X) \psi^b \ \ \ \ a \in \{1, \dots, N\},
\end{equation}
where $A^a_{\ b}$ are the components of the $\fu(N)$-valued $1$-form representing the connection $A$ in this local trivialization. We can also define a twisted Dirac operator acting on $\psi \in \Gamma(S \otimes E)$ by
\begin{equation}\label{twisted dirac operator}
    \Dir_A \psi := \sum_{i=1}^n \gamma(e_i) \nabla^A_{e_i} \psi
\end{equation}
where $(e_i)_i$ is any local orthonormal frame on $M$. Note that Clifford multiplication on $E$-valued spinors acts on the $S$ factor; that is, we have identified $\gamma(e_i)$ with $\gamma(e_i) \otimes \id$. With respect to a local trivialization of $E$, we can view $\psi \in \Gamma(S \otimes E)$ as $N$ complex spinors, and the twisted Dirac operator takes the form
\begin{equation}\label{twisted dirac operator local trivialization}
    \left( \Dir_A \psi \right)^a = \sum_{i=1}^n \gamma(e_i) \nabla^s_{e_i} \psi^a + A^{a}_{\ b}(e_i) \gamma(e_i) \psi^b, \ \ \ \ a \in \{1,\dots, N\}. 
\end{equation}
The twisted Dirac operator also satisfies a twisted Lichnerowicz formula,
\begin{equation}\label{twisted Lichnerowicz formula}
    \Dir_A^2 \psi = \left( \nabla^A \right)^* \nabla^A \psi + \frac{1}{4} R \psi + \fF_A \cdot \psi,
\end{equation}
where the curvature operator $\fF_A : S \otimes E \to S \otimes E$ is defined by
\begin{equation}\label{curvature operator definition}
    \fF_A (\sigma \otimes \eta) := \frac{1}{2} \sum_{j,k=1}^n \left(\gamma(e_j) \gamma(e_k) \sigma \right) \otimes \left( F_A(e_j, e_k) \eta \right).
\end{equation}

A gauge transformation is a section of $U(E)$, the bundle of unitary automorphisms of $E$. These sections form a group called the gauge group, which we denote $\sG(E)$. A gauge transformation $G \in \sG(E)$ acts on a connection $A$ by taking it to the connection $A'$ defined by 
\begin{equation}
    \nabla^{A'}  = G^{-1} \circ \nabla^A \circ G.
\end{equation}
Two connections $A$ and $A'$ are considered to be gauge equivalent if they lie in the same $\sG(E)$-orbit. The notion of local gauge equivalence over an open set is similarly defined using the restricted bundles and connections. If $A'$ is related to $A$ by a local gauge transformation $G$, then with respect to a local trivialization, the connection $1$-forms representing $A$ and $A'$ are related by
\begin{equation}
    A' = G^{-1} A G + G^{-1} dG.
\end{equation}
Note that $G^{-1}dG$ is indeed a $\fu(n)$-valued $1$-form.

\subsection{Pseudodifferential Calculus}\label{subsection: pseudos}

The proof of Theorem \ref{main theorem} requires a few key results from the theory of pseudodifferential operators, which we recall briefly here. For details and proofs, we refer the reader to \cite{Treves1980}, for example.

Let $W \subseteq \R^n$ be open. Then the symbol class $S^m(W)$ is defined to be the space of all functions $p \in C^\8(W \times \R^n)$ satisfying for all $\a,\b \in \N^{n}$,
\begin{equation}
    \left| \d_\x^\a \d_{x}^\b p(x,\x) \right| \leq C_{\a, \b} \< \x \>^{m - |\a|},
\end{equation}
where $\< \x \> := \sqrt{1 + |\x|^2}$ is the usual regularization of $|\x|$. The symbol class $S^m(W, \C^{k \times k})$ is then the space of all matrix-valued functions whose entries are in $S^m(W)$. Each $p \in S^m(W, \C^{k \times k})$ yields a map $P : C^\8_c(W, \C^k) \to C^\8(W, \C^k)$ given by
\begin{equation}\label{pseudodifferential definition}
    (P w)(x) := \int e^{i x \cdot \x} p(x,\x) \widehat{w}(\x)\, d\x,
\end{equation}
where $\widehat{w}$ denotes the Fourier transform of $w$. We say that $P \in \Psi^{m}(W, \C^{k})$ if $P$ has the form \eqref{pseudodifferential definition} for a symbol $p \in S^m(W, \C^{k \times k})$. A pseudodifferential operator is called classical if its symbol is given by an asymptotic series of the form
\begin{equation}\label{classical symbol}
    p(x,\x) = \sum_{j \geq 1} p_{m_j}(x,\x)
\end{equation}
where each term $p_{m_j}$ is positive-homogeneous of degree $m_j$ in $\x$, and the sequence of real numbers $m_j$ is decreasing to $- \infty$.

Now let $E$ be a vector bundle of rank $k$ over a smooth manifold $M$, and let $\sD'(E)$ be the space of $E$-valued distributions on $M$. A map $P : C^\8_0(M,E) \to C^\8(M,E)$ is called a pseudodifferential operator of order $M$ if for every chart $W$ of $M$ and every local trivialization of $E$ over $W$, the induced map is in $\Psi^m(W, \C^{k})$. The space of pseudodifferential operators on $E$ of order $m$ is denoted $\Psi^m(M,E)$. If $P \in \Psi^m(M,E)$ for all $m \in \R$, then we call $P$ a smoothing operator. The space of smoothing operators is denoted $\Psi^{- \infty}(M,E)$. Moreover, if $P \in \Psi^m(M,E)$, then it extends to a map $\sE'(E) \to \sD'(E)$, and if $M$ is compact, then $P$ extends to a map $H^{s}(E) \to H^{s - m}(E)$.

We often work with pseudodifferential operators modulo $\Psi^{-\infty}(M,E)$, since then composition of pseudodifferential operators becomes well-defined. Thus, we will treat two pseudodifferential operators as equivalent if their difference is a smoothing operator. Each equivalence class then corresponds to a symbol modulo $S^{-\infty}$, the intersection of all symbol classes $S^m$. Note that in particular two classical pseudodifferential operators differ by a smoothing operator if and only if their formal symbols are equal modulo $S^{-\infty}$.

if $P_i \in \Psi^{m_i}(M, E)$ for $i =1,2$, then their composition $Q := P_1 \circ P_2$ is well-defined modulo smoothing operators, and its symbol $q$ modulo $S^{-\infty}$ is given by
\begin{equation}
    q(x,\x) \sim \sum_{\a} \frac{1}{\a !} \d_\x^\a p_1(x,\x) D_x^\a p_2(x,\x).
\end{equation}

\section{Boundary determination}\label{section: boundary determination}

In this section, we prove that the Dirichlet-to-Neumann map $\Lambda_{g,A,Z,m}$ is a pseudodifferential operator of order $1$ whose symbol with respect to a local trivialization determines the Taylor series of $g$, $Q$, and $A$ at the boundary, when $A$ is in an appropriate gauge. We shall see upon applying the recipe of Lee and Uhlmann that unlike the analogous proofs for the scalar Laplacian \cite{Lee1989} or the connection Laplacian \cite{Ceki2020}, we need not place any restrictions on the metric. In particular, we shall see that because the connection coefficients of $\omega^s \otimes A$ involve the Levi--Civita connection, and thus the derivatives of the metric, the Taylor series of the metric can be recovered without fixing a representative in its conformal class as is done in \cite{Ceki2020}. 


\begin{theorem}\label{boundary determination}
The Dirichlet-to-Neumann map in Definition \ref{DN map for spinors} is an elliptic pseudodifferential operator of order $1$. Moreover, in any local trivialization where $A_n = 0$, the total symbol of $\Lambda_{g,A,Z,m}$ determines the Taylor series of $g$, $A$, and $Z$ at the boundary.
\end{theorem}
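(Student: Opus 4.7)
The plan is to use the classical factorization/recursion method of Lee--Uhlmann \cite{Lee1989}, adapted to the matrix-valued setting as in Ceki\'{c} \cite{Ceki2020}. Near any point of $\d M$, I would introduce boundary normal coordinates $(x', x^n)$ with $x^n$ the signed distance to the boundary and $\d_n = \n$ the inward normal, so that $g = (dx^n)^2 + g_{\a\b}(x', x^n)\, dx^\a dx^\b$ (Greek indices ranging over $1, \dots, n-1$). I would trivialize the orthonormal tangent frame by parallel-transporting an orthonormal frame on $\d M$ along the normal geodesics, producing a frame $(e_1, \dots, e_{n-1}, \d_n)$ in which $\omega^i_{\ j}(\d_n) \equiv 0$; lifting via $\lambda$ gives a local spin frame in which the normal component of the spin connection vanishes. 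Parallel-transporting a unitary frame of $E|_{\d M}$ along the normal geodesics similarly produces a trivialization of $E$ on the collar with $A_n \equiv 0$, which is precisely the gauge hypothesized in the statement. In this combined trivialization, $\nabla^A_\n$ acts on components as $\d_n$, so $\Lambda_{g,A,Z,m}\chi = \d_n \vp |_{x^n = 0}$.

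Using the twisted Lichnerowicz identity \eqref{twisted Lichnerowicz formula}, the operator takes the form
\begin{equation*}
\Dir_A^2 + Z - m^2 = -\d_n^2 + E(x', x^n)\d_n + Q(x', x^n, D_{x'}),
\end{equation*}
where $E$ is a smooth $\End(S \otimes E)$-valued function collecting the mean-curvature and Christoffel contributions, and $Q$ is a second-order tangential operator with principal symbol $|\x'|^2_{g'(x', x^n)} \cdot \id$. Following Lee--Uhlmann I would seek a factorization modulo smoothing operators,
\begin{equation*}
\Dir_A^2 + Z - m^2 \equiv -(\d_n + B^-)(\d_n - B^+) \pmod{\Psi^{-\8}},
\end{equation*}
in which $B^\pm = B^\pm(x', x^n, D_{x'})$ are classical tangential pseudodifferential operators of order $1$ depending smoothly on $x^n$, with full symbols $b^\pm \sim \sum_{j \geq 0} b^\pm_{1-j}$. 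Matching symbols order by order yields an eikonal equation $(b^+_1)^2 = |\x'|^2_{g'} \cdot \id$ together with a recursive linear system for the subprincipal symbols $b^+_{-j}$, $j \geq 0$. Taking the positive scalar root $b^+_1 = |\x'|_{g'} \cdot \id$ singles out the interior-side factor, so that any Dirichlet solution satisfies $(\d_n - B^+)\vp \in C^\8$ modulo a smoothing error, and consequently $\Lambda_{g,A,Z,m} \equiv B^+|_{x^n = 0} \pmod{\Psi^{-\8}}$. In particular, $\Lambda_{g,A,Z,m}$ is an elliptic classical pseudodifferential operator of order $1$.

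The Taylor-series determination is then extracted from the recursion. The principal symbol $b^+_1$ directly gives the boundary metric $g_{\a\b}|_{\d M}$. At the next order, the equation for $b^+_0$ is linear with invertible leading coefficient proportional to $b^+_1$, and its inhomogeneity involves $\d_n g_{\a\b}|_{\d M}$, the second fundamental form of $\d M$, and $A_\a|_{\d M}$; solving recovers the normal derivative of $g$ and the tangential components of $A$ on $\d M$. Iterating, at step $-k$ the symbol $b^+_{-k}$ is determined by an equation whose inhomogeneity involves the already-known jets together with the $(k{+}1)$-th normal jet of $g$, the $k$-th normal jet of $(A_\a)$, and the $(k{-}1)$-th normal jet of $Z$, each with an invertible leading coefficient. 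Since $A_n \equiv 0$ is already fixed throughout the collar, no component of $A$ is lost, and the induction recovers the full Taylor series of $g$, $A$, and $Z$ at $\d M$.

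The main obstacle is the bookkeeping inside the recursion. The symbols $b^\pm_{-j}$ take values in $\End(S \otimes E)$, so the factorization does not reduce to scalar identities, and the Clifford multiplication present in $\Dir_A$ produces cross-terms between the spin and bundle factors that must be tracked carefully to rule out collapse of data. One must verify, in particular, that each new Taylor coefficient of $g$, $A$, and $Z$ first enters the expected symbol with an invertible coefficient, so that the induction genuinely determines and is determined by all the data. The gauge choice $A_n = 0$ together with the parallel-frame trivialization of the spin bundle is precisely what makes these coefficients appear in the clean form used above; in an arbitrary trivialization, tangential gauge transformations could reabsorb portions of the data and obstruct the recursion.
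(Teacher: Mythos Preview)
Your outline is the same Lee--Uhlmann factorization the paper uses, and the setup (boundary normal coordinates, parallel-transported frames with $\omega^{\mathrm s}_n=0$ and $A_n=0$, Lichnerowicz formula, recursive determination of a tangential symbol $b\sim\sum b_{1-j}$, identification $\Lambda\equiv B|_{x^n=0}$ via a backward heat argument) matches the paper essentially step for step.

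The gap is exactly at the place you flag as ``the main obstacle.'' At order $b_0$ you assert that the inhomogeneity involves $\d_n g_{\a\b}|_{\d M}$ and $A_\a|_{\d M}$ ``each with an invertible leading coefficient,'' but this is not how the separation actually works, and for the connection Laplacian it famously fails: the even-in-$\xi'$ part of $b_0$ only produces the combination $\tfrac12\big(Eg^{\a\b}-\tfrac12\d_n g^{\a\b}\big)\xi_\a\xi_\b/|\xi|^2$, which carries the usual conformal ambiguity and does \emph{not} by itself determine $\d_n g_{\a\b}$. The paper's resolution, which your sketch is missing, is to read off the full connection form $\theta_\a=\id_S\otimes A_\a-\tfrac14\,\omega^{i}{}_{j}(\d_\a)\,\gamma(e_i)\gamma(e_j)\otimes\id_E$ from the \emph{odd}-in-$\xi'$ part of $b_0$, and then disentangle the two summands algebraically: a partial trace over $S$ isolates $A_\a$ (the Clifford bivectors $\gamma(e_i)\gamma(e_j)$ are traceless), and orthogonality of the $\gamma(e_i)\gamma(e_j)$ under the trace inner product then yields each $\omega^{i}{}_{j}(\d_\a)$. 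Converting these Levi--Civita coefficients from the orthonormal frame to the coordinate frame via the change-of-frame law gives $\Gamma^{n}{}_{\a\b}=-\tfrac12\d_n g_{\a\b}$ and hence the full normal jet of $g$, with no conformal normalization needed. The same partial-trace/Clifford-orthogonality device is what drives every subsequent inductive step. Without making this mechanism explicit, your claim that the recursion recovers $\d_n^k g$ and $\d_n^{k-1}A$ at each stage is not yet justified.
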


It is worth stating once the precise meaning of Theorem \ref{boundary determination} in the context of spinor bundles, though we immediately drop this level of precision in the interest of clarity. 

Suppose we have a Riemannian metric $g$ on $M$ inducing a spinor bundle $S$ for a fixed choice of spin structure. Then for a connection $A$ on $E$, and an endomorphism $Z$ of $S \otimes E$, we can construct the corresponding Dirichlet-to-Neumann map $\Lambda_{g,A,Z,m}$ on $S|_{\d M}$. Suppose we have also another Riemannian metric $\Tilde{g}$ on $M$, which induces another spinor bundle $\Tilde{S}$ corresponding to a choice of spin structure. Then for any connection $\Tilde{A}$ on $E$, and endomorphism $\Tilde{Z}$ of $\Tilde{S} \times E$, we can construct the corresponding Dirichlet-to-Neumann map $\Lambda_{\Tilde{g}, \Tilde{A}, \Tilde{Z}, m}$ on $\Tilde{S}|_{\d M}$. To say that these two Dirichlet-to-Neumann maps are equal is of course to mean that they are equal up to isomorphism, but we must take care to specify what kind of isomorphism. 

\begin{definition}\label{equality of DN maps}
We say that the Dirichlet-to-Neumann maps $\Lambda_{g,A,Z,m}$ and $\Lambda_{\Tilde{g}, \Tilde{A}, \Tilde{Z}, m}$ corresponding to data $(g,A,Z)$ and $(\Tilde{g}, \Tilde{A}, \Tilde{Z})$ respectively are isomorphic if there exists an isomorphism $\Phi : S|_{\d M} \to \Tilde{S}|_{\d M}$ induced by an isomorphism of the $SO(n)$-structures of $g$ and $\Tilde{g}$, restricted to $\d M$, such that
\begin{equation}
    \Lambda_{\tilde{g}, \tilde{A}, \tilde{Z}, m} = \Phi \circ \Lambda_{g,A,Z,m} \circ \Phi^{-1}.
\end{equation}
This is equivalent to saying that the local matrix representations of $\Lambda_{g,A,Z,m}$ and $\Lambda_{\tilde{g}, \tilde{A}, \tilde{Z}, m}$ are equal when we choose local trivializations of $S|_{\d M}$ and $\tilde{S}|_{\d M}$ induced by orthonormal frames for $g$ and $\tilde{g}$ respectively.
\end{definition}

The precise meaning of Theorem \ref{boundary determination} is then that if two sets of data $(g,A,Z)$ and $(\tilde{g}, \tilde{A}, \tilde{Z})$ lead to equivalent Dirichlet-to-Neumann maps, then the Taylor series of $g$ and $\tilde{g}$ are equal at the boundary, as are the Taylor series of $Z$ and $\tilde{Z}$, and moreover, after possibly making a gauge transformation of $\tilde{A}$, the Taylor series of $A$ and $\tilde{A}$ are equal at the boundary in any local trivialization where $A_n = 0$ and $\tilde{A}_n = 0$ near the boundary.

The proof of Theorem \ref{boundary determination} follows the recipe of Lee and Uhlmann, which is by now standard in the literature. The idea is to factor the second-order differential operator $\Dir_A^2 + Z - m^2$ into a product of first-order pseudodifferential operators modulo smoothing:
\begin{equation}
    \Dir_A^2 + Z - m^2 \sim \left( D_n + i(E - \theta_n) - i B \right) \left( D_n - i \theta_n + i B \right)
\end{equation}
where $D_n = -i \d_n$, $E = -\frac{1}{2}g^{\a \b} \d_n g_{\a \b}$, $\theta_n = A_n + \omega^{\mathrm{s}}_n$, and $B$ is some pseudodifferential operator to be determined. We can inductively solve for the total symbol of $B$ to show that such a pseudodifferential operator indeed exists. Then, using this factorization and the theory of generalized heat equations, one can show that $B|_{\d M} \sim \Lambda_{g,A,Z,m}$. Finally, we note that the inductive procedure used to determine the symbol of $B$ in terms of the known data $(g,A,Z)$ can be inverted to inductively solve for the normal derivatives of this data at the boundary.

\begin{proof}[Proof of Theorem \ref{boundary determination}]
For this proof, we let $x$ denote the coordinates in a fixed boundary chart for $g$, and write $x = (x', x^n)$, where $x^n$ is the normal coordinate, and $x' = (x^1, \dots, x^{n-1})$ are the tangential coordinates. Greek letters run over $\{1, \dots, n-1 \}$ while Latin indices run over $\{1, \dots, n \}$. So in particular, $(x^\a, 0)$ form coordinates on the boundary. We let $D_k := -i \d_k$. Finally, let $\theta$ be the matrix valued $1$-form representing the connection $\omega^\mathrm{s} \otimes A$ in a local trivialization to be determined, where $\omega^\mathrm{s}$ is the spin connection. By using the Lichnerowicz formula
\begin{equation}
    \Dir_A^2 = \left(\nabla^A\right)^* \nabla^A + \frac{1}{4} R + \frac{1}{2} F_A
\end{equation}
and separating out the normal derivatives from the tangential derivatives, we can write
\begin{align}\label{normal derivatives separated out}
    \Dir_A^2 + Z - m^2 &= D_n^2 + i(E - 2 \theta_n) D_n + Q_2 + Q_1 + Q_0
\end{align}
where $E = -\frac{1}{2} g^{\a \b} \d_n g_{\a \b}$, and where
\begin{align}
    Q_2 &:= -g^{\a \b} \d_a \d_\b, \\
    Q_1 &:= -g^{\a \b} \theta_\a \d_\b + g^{\a \b} \Gamma_{\a \b}^{\g} \d_\g, \\
    Q_0 &:= -g^{ij} \left(\d_i \theta_j\right) - g^{ij} \theta_i \theta_j + g^{ij}, \Gamma^k_{ij} \theta_k + \frac{1}{4} R + \frac{1}{2} F_A + Z - m^2.
\end{align}
Here, $\Gamma^i_{\ jk}$ are the Christoffel symbols of $g$ corresponding to the boundary chart. Note that $Q_i$ is an $i$-th order differential operator involving only tangential derivatives. We want to show that there exists a pseudodifferential operator $B(x,D')$ of order $1$ such that
\begin{equation}\label{factorization}
    \Dir_A^2 + Z - m^2 = \left( D_n + i(E -  \theta_n) - iB \right) \left(D_n - i \theta_n + iB \right) + \sS
\end{equation}
where $\sS$ is a smoothing operator. In writing $B(x,D')$, we mean that $B$ is a map $[0, \epsilon) \to \Psi(\d M)$, which for each value $x_n$ yields a pseudodifferential operator $B(x_n, x', D')$ on the corresponding tangential leaf. In particular we have a well-defined notion of restricting $B$ to the boundary, $B(0, x', D')$. The motivation for this factorization comes from the following Lemma, a proof of which is given following the current proof.

\begin{lemma}\label{B is the DN map}
If $B(x,D')$ exists, then $B|_{\d M} = \Lambda_{g,A,Z,m} + \sS$ where $\sS$ is a smoothing operator.
\end{lemma}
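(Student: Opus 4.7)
The plan is to use the factorization \eqref{factorization} to reduce the second-order Dirichlet problem for $\Dir_A^2 + Z - m^2$ to an auxiliary first-order forward-evolution problem $Q_- u = 0$ on a collar of $\d M$, whose solution operator is built so that its normal derivative at the boundary is precisely $B|_{\d M}$.

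First I would construct a smooth family of pseudodifferential operators $U(x^n) : C^\8(S \otimes E|_{\d M}) \to C^\8(S \otimes E|_{\d M})$, for $x^n \in [0, \e)$, with $U(0) = \id$ and $Q_- \circ U \in \Psi^{-\8}$. The principal symbol of $B$ is forced by the factorization to be $\sqrt{g^{\a\b}(0,x')\x_\a \x_\b}\,\id$, i.e.\ the positive square root, chosen so that $Q_-$ is the forward-parabolic factor. Consequently $-iQ_-$ behaves like a generalised heat operator propagating into $M$, and a parametrix $U$ can be built by the standard Volterra / geometric optics procedure employed by Lee--Uhlmann in the scalar case: the symbol of $U$ is determined inductively by transport equations in the normal variable, and the resulting family extends continuously on every Sobolev scale.

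Given boundary data $\chi$, set $\widetilde{\vp}(x^n, x') := U(x^n)\chi$ on the collar and extend it by a cutoff supported near $\d M$ to a global section $\Phi_1 \in C^\8(S \otimes E)$ with $\Phi_1|_{\d M} = \chi$. Using \eqref{factorization},
\begin{equation*}
    (\Dir_A^2 + Z - m^2)\Phi_1 = Q_+(Q_- \Phi_1) + \sS \Phi_1 =: \cR \chi,
\end{equation*}
where $\cR : C^\8(\d M) \to C^\8(M)$ is smoothing in $\chi$ because $Q_- \widetilde{\vp}$ is smoothing on the collar and the cutoff commutator contributes a smooth term compactly supported away from $\d M$. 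Since $m^2 \notin \Spec(\Dir_A^2 + Z)$, the Dirichlet inverse $G$ of $\Dir_A^2 + Z - m^2$ is bounded on the relevant Sobolev scale, so the true solution is $\vp = \Phi_1 - G\cR\chi$. From $Q_-\widetilde{\vp} \sim 0$ modulo smoothing and the trivialisation identity $\nabla^A_\n = \d_n + \th_n$ on $\d M$, one immediately reads off
\begin{equation*}
    \nabla^A_\n \Phi_1 \big|_{\d M} \equiv B|_{\d M} \chi \pmod{\Psi^{-\8}(\d M)}.
\end{equation*}
The remaining correction $\nabla^A_\n(G\cR \chi)|_{\d M}$ is smoothing in $\chi$ because $\cR$ gains arbitrary regularity, $G$ preserves Sobolev regularity with zero Dirichlet trace, and the trace of $\nabla^A_\n$ maps $H^{s+1}(M) \to H^{s+1/2}(\d M)$ continuously. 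Adding these contributions gives $\Lam_{g,A,Z,m} - B|_{\d M} \in \Psi^{-\8}(\d M)$.

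The main obstacle is the parametrix construction in Step 1: one must verify that the Volterra iteration genuinely converges modulo smoothing when $B$ is matrix-valued rather than scalar. The saving observation is that the matrix-valued subprincipal terms of $B$ enter each transport equation as zeroth-order perturbations of a scalar principal symbol, so the leading-order kernel $e^{-x^n |\x'|_{g'}}\,\id$ still drives the asymptotic construction and each subsequent equation is a linear ODE in $x^n$ with smooth coefficients. Beyond this, the remaining work is standard, requiring only careful bookkeeping of the cutoff in Step 2 and of the mapping properties of $G$ and of the Neumann trace in Step 3.
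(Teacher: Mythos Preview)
Your plan is sound and would prove the lemma, but it follows a genuinely different route from the paper. The paper does not build a Poisson parametrix at all. Instead it starts from the \emph{exact} harmonic extension $\vp$ of $\chi$, sets $\psi := (D_n - i\th_n + iB)\vp$, and observes that $\psi$ then satisfies $(D_n + i(E-\th_n) - iB)\psi = -\sS\vp$ on the collar. Rewritten in $t = T - x^n$ this is a generalised backward heat equation with smooth forcing (since $\sS$ is smoothing) and smooth Cauchy data at $x^n = T$ (by interior elliptic regularity of $\vp$); the solution operator of such an equation is smoothing, so $R\chi := \psi|_{\d M}$ is a smoothing operator in $\chi$, and the identity $R\chi = -i\Lambda_{g,A,Z,m}\chi + iB|_{\d M}\chi$ is then immediate from the definition of $\psi$. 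This bypasses both the Volterra construction of $U(x^n)$ and the correction step via the Dirichlet inverse $G$. Your approach, by contrast, manufactures an explicit parametrix for the Poisson operator---a heavier investment, but one that yields a concrete approximate extension operator, useful if one later wants mapping properties or a direct symbol expansion of the Poisson kernel rather than the recursion \eqref{degree 2 eqn}--\eqref{degree m eqn}.

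One bookkeeping slip to fix: with $Q_- = D_n - i\th_n + iB$ (the right-hand factor in \eqref{factorization}), the equation $Q_- u = 0$ reads $\d_n u = (B - \th_n)u$, whose principal part at the symbol level is $\d_n u = b_1\, u$. For this to be well-posed \emph{forward} in $x^n$ (decay into $M$) you need $b_1 = -|\x'|_g$, not $+|\x'|_g$; the positive root makes $Q_-$ the \emph{backward}-parabolic factor, and your family $U(x^n)$ would then fail to be bounded on any Sobolev scale. The paper makes exactly the negative choice (see the sentence following \eqref{principal symbol of b}). Once the sign is corrected your argument goes through unchanged.
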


An important implication of the preceding Lemma is that $B(0,x',D')$ has the same total symbol as the Dirichlet-to-Neumann map, and so in doing symbol computations, it suffices to work with the operator $B$ appearing in the factorization.

We now proceed with the proof of Theorem \ref{boundary determination}. In order to show that such a $B(x,D')$ exists, we will use the factorization \eqref{factorization} to determine a formal symbol whose corresponding pseudodifferential operator satisfies \eqref{factorization} in each degree. So, by Equations \eqref{normal derivatives separated out} and \eqref{factorization}, we have
\begin{equation}\label{factorization equals normal derivatives separated out}
    \left( D_n + i(E -  \theta_n) - iB \right) \left(D_n - i \theta_n + iB \right) + \sS = D_n^2 + i(E - 2 \theta_n) D_n + Q_2 + Q_1 + Q_0
\end{equation}
where $\sS$ is a smoothing operator. Expanding the left-hand side of \eqref{factorization equals normal derivatives separated out}, we see it equals
\begin{equation}\label{expansion of factorization}
    D_n^2 + i(E - 2 \theta_n) D_n + i[D_n, B] - \d_n \theta_n + (E - \theta_n) \theta_n - [B,\theta_n] - EB + B^2.
\end{equation}
Replacing the left-hand side of Equation \eqref{factorization equals normal derivatives separated out} with \eqref{expansion of factorization} and rearranging, we get
\begin{equation}\label{simplified equality of factorization and separated}
    i[D_n, B] - [B, \theta_n] - EB + B^2 = Q_2 + Q_1 + Q_0'
\end{equation}
where
\begin{equation}
    Q_0' := Q_0 + \d_n \theta_n - (E-\theta_n) \theta_n.
\end{equation}
We want to consider the total symbols of the left and right-hand sides of \eqref{simplified equality of factorization and separated}. Let $b(x,\x')$ be the symbol of $B(x,D')$, and similarly for $Q_2, Q_1$ and $Q_0'$. Then Equation \eqref{simplified equality of factorization and separated} implies
\begin{equation}\label{the symbol eqn}
    \d_n b + [\theta_n, b] + i \sum_\a \left(\d_{\x_\a} b \right) \left( \d_{x^\a}  \theta_n\right) - Eb + \sum_{\nu} \frac{1}{\nu!} \left( \d_{\xi'}^\n b \right) \left( D_{x'}^\n b \right) = q_2 + q_1 + q_0',
\end{equation}
where $\nu$ runs over multi-indices. Let us assume that $b(x,\x')$ has a formal symbol given by
\begin{equation}\label{formal expansion of b}
    b(x,\x') = \sum_{k \leq 1} b_k(x,\x')
\end{equation}
where $b_k(x,\x')$ is homogeneous in $\x'$ of degree $k$ away from $0$. Plugging \eqref{formal expansion of b} into \eqref{the symbol eqn} and taking the degree $m$ part of the equation, we get
\begin{equation}\label{degree 2 eqn}
    b_1^2 = q_2
\end{equation}
\begin{equation}\label{degree 1 eqn}
    2b_1b_{0} + \d_n b_1 - E b_1 + \sum \left( \d_{\xi'} b_1 \right) \left( D_{x'} b_1 \right) = q_1
\end{equation}
\begin{equation}\label{degree 0 eqn}
    2b_1b_{-1} + \d_n b_0 + [\theta_n, b_0] + i \sum_\a \left( \d_{\x_\a} b_{1} \right)\left( \d_{x^\a} \theta_n \right) - E b_0 + \sum_{\substack{j+k=|\n| \\ 0 \leq j,k \leq 1}} \frac{1}{\nu!} \left( \d_{\xi'}^\n b_j \right) \left( D_{x'}^\n b_k \right) = q_0',
\end{equation}
and
\begin{equation}\label{degree m eqn}
    2b_1b_{m-1} = - \d_n b_m - [\theta_n, b_m] - i \sum_\a \left( \d_{\x_\a} b_{m+1} \right)\left( \d_{x^\a} \theta_n \right) + E b_m - \sum_{\substack{j+k-|\n| = m \\ m \leq j,k \leq 1}} \frac{1}{\nu!} \left( \d_{\xi'}^\n b_j \right) \left( D_{x'}^\n b_k \right)
\end{equation}
for $m \leq -1$. Note that since $q_2(x,\x') = g^{\a \b} \x_\a \x_\b$, Equation \eqref{degree 2 eqn} yields
\begin{equation}\label{principal symbol of b}
    b_1(x,\x') := \pm |\x'|_{g}.
\end{equation}
In particular, we can choose the principal symbol $b_1$ to be a negative scalar, as was needed in the proof of the claim above. It is clear from Equations \eqref{degree 1 eqn}-\eqref{degree m eqn} that for $m < 1$, we can inductively determine $b_{m-1}$ from the first $|m|+1$ terms in \eqref{formal expansion of b}. Therefore, since we have a formal symbol that satisfies \eqref{the symbol eqn}, the corresponding pseudodifferential operator satisfies \eqref{factorization equals normal derivatives separated out}. This completes the proof that the pseudodifferential operator $B$ exists, and hence that the Dirichlet-to-Neumann map is an elliptic pseudodifferential operator of order $1$.

In the other direction, suppose we know the Dirichlet-to-Neumann map $\Lambda_{g,A,Z,m}$ associated to data $(g,A,Z,m)$. Then we know its total symbol in any boundary chart and local trivialization of $E$ over the boundary. By using equations \eqref{degree 2 eqn} -- \eqref{degree m eqn}, we can work backwards and inductively extract the boundary data from the homogeneous terms in the expansion of the symbol. To do this, we fix a boundary chart for $\d M$, and an orthonormal frame $e_\a$ on $\d M$, which we extend into $M$ via parallel transport along $e_n$, the inward pointing normal. This orthonormal frame $e_i$ now induces a local trivialization of the spinor bundle near $\d M$ in which the spin connection takes the form \eqref{spin connection local}. We also pick a local trivialization of $E$.

To start with, the restriction of the metric at the boundary $g|_{\d M}$ is encoded in the function $q_2$ and can therefore be recovered from the principal symbol $b_1$ as per equation \eqref{degree 2 eqn}. Upon simplifying and rearranging, Equation \eqref{degree 1 eqn} yields
\begin{equation}\label{degree 1 step of recovery}
    b_0 = -\frac{i}{2} g^{\a \b} \theta_{\a} \frac{\x_\b}{|\x|} + \frac{1}{2} \left( Eg^{\a \b} - \frac{1}{2} \d_n g^{\a \b} \right)\frac{\x_\a \x_\b}{|\x|^2} + F_0\left(g_{\a \b}|_{\d M} \right)
\end{equation}
for all $\x \in T^*\d M \setminus \{0\}$, where $F_1$ is some function of its arguments involving only tangential derivatives along $\d M$. In particular, $F_1$ is known once $g|_{\d M}$ is known. Thus, from $b_0$ we can determine the first two terms on the right-hand-side of \eqref{degree 1 step of recovery}. Moreover, since these first two terms have opposite parity with respect to $\x$, we can determine each term separately by substituting different values for $\x$. Therefore, knowing $g^{\a \b}|_{\d M}$, we can recover
\begin{equation}\label{theta is the sum of two connections}
    \theta_\a|_{\d M} = \id_S \otimes A_\a|_{\d M} - \frac{1}{4} \omega^i_{\ j}(\d_\a)|_{\d M} \gamma(e_i) \gamma(e_j) \otimes \id_E
\end{equation}
where $\omega^i_{\ j}(\d_\a)$ denotes the connection $1$-form for the Levi-Civita connection with respect to the orthonormal frame $e_i$, evaluated on the coordinate vector $\d_\a$. By the Clifford relations, the matrices $\gamma(e_i)\gamma(e_j)$ occurring in the second term of \eqref{theta is the sum of two connections} are traceless. We can therefore recover $A_\a$ as a partial trace over the spinor bundle: 
\begin{equation}
    A_{\a}(0,x') = \frac{1}{\rank{S}} \Tr_S \theta_\a|_{\d M}.
\end{equation}
Since $A_n = 0$ in this gauge by assumption, we have recovered $A|_{\d M}$. Knowing $A|_{\d M}$, from Equation \eqref{theta is the sum of two connections} we can recover the matrix
\begin{equation}
    -\frac{1}{4} \omega^i_{\ j}(\d_\a)|_{\d M} \gamma(e_i) \gamma(e_j).
\end{equation}
Using the fact that $\gamma(e_i) \gamma(e_j)$ provide an orthonormal set of matrices with respect to the trace inner product, we can extract $\omega^i_{\ j}(\d_\a)|_{\d M}$. The final obstacle is to obtain the Christoffel symbols of $g$ with respect to the boundary normal chart. For this, let $h$ be the matrix defined by $\d_\a = h^\b_{\ \a} e_\b$. Then, if $\Gamma^{i}_{\ jk}$ denotes the connection coefficients in the boundary normal chart, we have
\begin{equation}\label{connection coefficient transformation law}
    \Gamma^{i}_{\ \a j} = \left( h^{-1} \right)^i_{\ k} \left( \omega^k_{\ \l}(\d_\a) \right) h^{\l}_{\ j}  + \left( h^{-1} \right)^i_{\ k} \d_{\a} h^k_{\ j}.
\end{equation}
Since $h$ is known on the boundary, so is $\omega^i_{\ j}(\d_\a)|_{\d M}$, and we can recover $\Gamma^i_{\a j}$. In particular, we recover the normal derivative of the metric at the boundary, since
\begin{equation}
    \Gamma^n_{\a \b} = -\frac{1}{2} \d_n g_{\a \b}.
\end{equation}
Thus we have recovered $A_\a|_{\d M}$ $\d_n g|_{\d M}$. The next equation \eqref{degree 0 eqn} leads to an equation of the form
\begin{align}\label{degree -1 step of recovery}
     b_{-1} &= -\frac{i}{2} g^{\a \b} \d_n \theta_\a \frac{\x_\b}{|\x|^2} + \left( T_{-1}^{\a \b}\left(g|_{\d M}, \d_n g|_{\d M}, \d_n^2 g|_{\d M}, \d_n A|_{\d M} \right) + Z|_{\d M} g^{\a \b} \right) \frac{\x_\a \x_\b}{|\x|^3} \\
     &\ \ \ \ \ \ + F_{-1}\left( g|_{\d M}, \d_n g |_{\d M}, A|_{\d M} \right).
\end{align}
So as before, we can recover $\d_n \theta_\a|_{\d M}$. Moreover, since the local trivialization of $S$ is induced by an orthonormal frame, the matrices $\gamma(e_i)$ are constant, and thus $\d_n \theta_\a$ takes the form
\begin{equation}
    \d_n \theta_\a = - \frac{1}{4} \d_n\left( \omega^i_{\ j}(\d_\a) \right) \g(e_i) \g(e_j) \otimes \id_E + \id_S \otimes \d_n A_\a.
\end{equation}
Taking a partial trace over $S$ as before, we can determine $\d_n A_\a|_{\d M}$, and therefore also $\d_n\left( \omega^i_{\ j}(\d_\a) \right)$. Using the fact that $e_\b$ is parallel along $\d_n$, and the fact that we have determined $\Gamma^i_{\ jk}|_{\d M}$, it follows that $\d_n h |_{\d M}$ is known. Therefore, taking the normal derivative of the transformation law \eqref{connection coefficient transformation law} for the connection coefficients, we get
\begin{equation}
    \d_n \Gamma^i_{\ \a j} = \left( h^{-1} \right)^i_{\ k} \d_n \left( \omega^k_{\ \l}(\d_\a) \right) h^{\l}_{\ j}  + S\left(h|_{\d M}, \d_n h|_{\d M}, g|_{\d M}, \d_n g|_{\d M} \right),
\end{equation}
where $S$ is a function of known quantities. We can therefore recover the normal derivatives of the Christoffel symbols of $g$ in the boundary normal chart, and in particular, we can recover
\begin{equation}
    \d_n \Gamma^n_{\ \a \b} |_{\d M} = -\frac{1}{2} \d_n^2 g_{\a \b} |_{\d M}. 
\end{equation}
Thus, having recovered the second derivatives of the metric at $\d M$, and the first derivatives of the connection at $\d M$, the only unknown remaining is the endomorphism $Z|_{\d M}$ in the even term of \eqref{degree -1 step of recovery}, which is now easily recovered.

Continuing in this fashion, at step $m$ we get an equation of the form
\begin{align}
    b_{1-m} &= -\frac{i}{2} g^{\a \b} \d^{m-1}_n\theta_\a \frac{\x_{\b}}{|\x|^m} + \left( T_{1-m}^{\a \b} + \d_n^{m-2} Z|_{\d M} \right) \frac{\x_\a \x_\b}{|\x|^{m+1}} \nonumber \\
    &\ \ \  + F_{1-m}\left( g_{\a \b}|_{\d M}, \dots, \d^{m-1}_n g_{\a \b} |_{\d M}, A_{\a}|_{\d M}, \dots, \d_n^{m-2} A_{\a} |_{\d M}, Z|_{\d M}, \dots, \d_n^{m-3} Z|_{\d M}  \right),
\end{align}
where $T_{1-m}^{\a \b}$ is a known quantity depending on derivatives of $g$ up to order $m$, derivatives of $A$ up to order $m-1$, and derivatives of $Z$ up to order $m-3$. Therefore, as before we can recover $\d_n^{m-1} \theta_\a$, and by taking traces, $\d_n^{m-1} A_\a$ and $\d_n^{m-1} \omega^i_{\ j}(\d_\a)$. Then, taking derivatives of the transformation law, we can recover $\d_n^{m-1}\Gamma^n_{\ \a \b}$ as above. In this fashion we recover the normal derivatives of $A$ and $g$ at the boundary.
\end{proof}

We now give a proof of Lemma \ref{B is the DN map}, which is essentially identical to the proof given in \cite{Lee1989}; we nonetheless include it here in the interest of being self-contained.

\begin{proof}[Proof of Lemma \ref{B is the DN map}]
Let us suppose that a first-order pseudodifferential operator $B(x,D')$ exists that satisfies Equation \eqref{factorization}. Let $\chi \in H^\frac{1}{2}(S|_{\d M})$, and let $\vp \in \sD'(S)$ be the solution to the Dirichlet problem \eqref{Dirichlet problem for chi}. Since $(\Dir_A^2 + Z - m^2) \vp = 0$, the factorization \eqref{factorization} yields the following system of equations:
\begin{align}
    \psi :&= \left( D_n - i \theta_n + iB \right) \vp, \  \ \vp|_{x^n = 0} = \chi \label{factor equation for phi} \\
    h &= \left( D_n + i(E - \theta_n) -iB \right) \psi \label{factor equation for psi}
\end{align}
where $h$ is some smooth spinor field near $\d M$ given by $h := -\sS \psi$. Note that although the regularity of $\psi$ is not a priori known, we know that $h$ is smooth since $\sS$ is a smoothing operator. Writing $t := T - x^n$, we can write Equation \eqref{factor equation for psi} as
\begin{equation}\label{backwards heat equation}
    \d_t \psi - (B - E + \theta_n) \psi = -ih.
\end{equation}
Equation \eqref{backwards heat equation} is a generalized backwards heat equation. Now, by elliptic regularity, we know that $\vp$ is smooth in the interior, and therefore so is $\psi$ by equation \eqref{factor equation for phi}. In particular, $\psi|_{x^n = T}$ is smooth. We'll see later that we can choose the principal symbol of $B$ to be a negative scalar, which implies that the backwards heat equation \eqref{backwards heat equation} with initial condition $\psi|_{x_n = T}$ is well-posed. Thus the solution operator is a smoothing operator, and since $\psi|_{x_n = T}$ is smooth, so is $\psi$. In particular, $\psi|_{x^n = 0}$ is smooth. So, let us define an operator $R$ by
\begin{equation}\label{R the smoothing operator}
    R \chi := \psi|_{\d M}.
\end{equation}
Then we have that $R$ is smoothing by construction, and moreover
\begin{align}
    R \chi := \psi|_{\d M} &= \left( \left( D_n - i\theta_n + i B(x,D') \right) \vp \right)\big|_{\d M} \nonumber \\
    &= -i\left( \nabla_n^A \vp\right) \big|_{\d M} + i B(0,x',D')\chi \nonumber \\
    &= -i\Lambda_{g,A,Z,m} \chi + iB(0,x',D')\chi. \label{R equation for DN map}
\end{align}
So indeed, we have $B|_{\d M} = \Lambda_{g,A,Z,m}$ modulo smoothing.
\end{proof}

\begin{remark}\label{conformal remark}
The proof of Theorem \ref{boundary determination}, unlike in the case of the scalar or connection Laplacian, holds for $\dim{M} = 2$. Moreover, it is unnecessary to normalize the metric in order to obtain the Taylor series of the endomorphism $Z$ at the boundary, as is done for the connection Laplacian in \cite{Ceki2020}. These observations can be attributed to the fact that conformal covariance of the Dirac operator does not extend to even powers thereof; see \cite{Fischmann2014} for details.
\end{remark}

\section{Recovering real-analytic Yang--Mills--Dirac connections}\label{section: recovering connections}

From here on, we fix a background metric $g$ on a compact spin manifold $M$ with boundary $\d M$, and let $S$ be the spinor bundle of $M$. Let $E$ be a Hermitian vector bundle of rank $N$ over $M$, associated to its principal $U(N)$-bundle $P$ of unitary frames, and let $A$ be a $U(N)$-connection on $P$, whose spinorial Dirichlet-to-Neumann map $\Lambda_A$ is given. We want to introduce an auxiliary $E$-valued spinor $\phi$ to which $A$ couples in a physically interesting way, and whose boundary value is known, and investigate the extent to which $\Lambda_A$ determines the connection $A$ modulo the action of the gauge group. To this end, we introduce the Yang--Mills--Dirac equations:
\begin{equation}\label{YMD spinor}
    \Dir_A \phi = m \phi
\end{equation}
\begin{equation}\label{YMD connection}
    d_A^*F_A = J(\phi),
\end{equation}
where $m \in \R$, and $J : \Gamma(S \otimes E) \to \Lambda^1(M, \ad{P})$ associates to each $E$-valued spinor $\vp$ its {\em current}, and is defined as follows: for $\vp \in \Gamma(S \otimes E)$, there is a unique $J(\vp) \in \Lambda^1(M,\ad{P})$ such that
\begin{equation}
    \< \vp, \x \cdot \vp  \> = \< J(\vp), \xi \>
\end{equation}
for all $\x \in \Lambda^1(M,\ad{P})$. Note that for any gauge transformation $G$, we have $J(G \vp) = G J(\vp) G^{-1}$. The physical significance of Equations \eqref{YMD spinor}--\eqref{YMD connection} is that they represent a gauge field $A$ interacting with some matter field $\phi$ of mass $m$, by means of its charge current $J(\phi)$.  For details on the Yang--Mills--Dirac system, we refer the reader to \cite{Bleecker2005}.

Because we are concerned primarily with the Dirichlet-to-Neumann map of the second-order operator $\Dir_A^2$, we will assume that the connection $A$ and the auxiliary spinor field $\phi$ satisfy the more general second-order system:
\begin{equation}\label{YMD-2 spinor}
    \Dir_A^2 \phi = m^2 \phi
\end{equation}
\begin{equation}\label{YMD-2 connection}
    d_A^*F_A = J(\phi).
\end{equation}
Note that every solution of \eqref{YMD spinor}--\eqref{YMD connection} is a solution of \eqref{YMD-2 spinor}--\eqref{YMD-2 connection}, but not vice versa.

In this section, we prove the following result:

\begin{theorem}\label{main thm}
Let $A$ and $B$ be $U(N)$-connections as above, real-analytic in the interior, and let $\phi$ and $\psi$ be $E$-valued spinors on $M$ such that $(A, \phi)$ and $(B,\psi)$ both satisfy the second-order Yang--Mills--Dirac system \eqref{YMD-2 spinor}--\eqref{YMD-2 connection}, and such that $\phi|_{\d M} = \psi|_{\d M}$. If $\Lambda_{A,m} = \Lambda_{B,m}$, then $A$ and $B$ are locally gauge equivalent about any point in $M$.
\end{theorem}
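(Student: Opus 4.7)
The plan is to combine the boundary determination result (Theorem \ref{boundary determination}) with real-analytic continuation, using the Yang--Mills--Dirac structure to propagate boundary data into the interior. First I would fix a boundary point $p \in \partial M$, local boundary coordinates near $p$, and a local trivialization of $E$ on a collar neighborhood of $\partial M$ in which $A_n = 0$; such a trivialization exists by integrating $\nabla_\nu^A$ from an arbitrary trivialization on $\partial M$. By solving a first-order ODE in the normal direction with initial condition $G|_{\partial M} = \id$, one can find a gauge transformation $G$ that puts $B$ into the same $_n = 0$ gauge in this trivialization. Since the second-order YMD equation \eqref{YMD-2 spinor} is of the form covered by Theorem \ref{boundary determination} with $Z = 0$, that theorem tells us that the total symbol of $\Lambda_{A,m}$ in this gauge determines the Taylor series of $A$ and $g$ at $\partial M$; applying this to both sides of $\Lambda_{A,m} = \Lambda_{B,m}$ forces the Taylor series of $A$ and $G^{-1} \cdot B \cdot G + G^{-1} dG$ to agree to infinite order along $\partial M$. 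Since $G|_{\partial M} = \id$, the boundary condition $\phi|_{\partial M} = \psi|_{\partial M}$ is preserved, so after replacing $B$ and $\psi$ by their gauge transforms we may assume that $A$ and $B$ agree to infinite order at every point of $\partial M$.

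To propagate this infinite-order agreement from $\partial M$ into the interior, I would show that $(A,\phi)$ and $(B,\psi)$ are real-analytic up to the boundary. Writing the YMD system \eqref{YMD-2 spinor}--\eqref{YMD-2 connection} in the gauge $A_n = 0$ and supplementing with a Coulomb-type condition on the tangential components to eliminate the residual gauge freedom yields a determined second-order quasilinear system in $(A,\phi)$ whose principal symbol is elliptic. Since the metric, bundle, and boundary datum $\phi|_{\partial M}$ are real-analytic, classical Morrey--Nirenberg boundary regularity for elliptic systems with real-analytic coefficients implies that $(A,\phi)$ extends to real-analytic sections on an open neighborhood of $\overline{M}$ inside an ambient real-analytic manifold; the same holds for $(B,\psi)$. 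With real-analyticity up to $\partial M$ and Taylor series agreement at every boundary point, the identity theorem for real-analytic functions forces $A = B$ on an open neighborhood of $\partial M$.

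To reach an interior point $p \in M$, I would choose a path in $M$ from $p$ to $\partial M$ and apply analytic continuation along a chain of overlapping charts, using that $A$ and $B$ are real-analytic in the interior by hypothesis. This yields $A = B$ in the chosen gauge on a neighborhood of $p$, and undoing the gauge transformation $G$ produces the local gauge equivalence of the original $A$ and $B$ about $p$. The main obstacle is the ellipticity-plus-real-analyticity claim in the previous paragraph: the Yang--Mills operator $d_A^* d_A$ on $1$-forms has a large kernel coming from infinitesimal gauge transformations, so ellipticity of \eqref{YMD-2 connection} requires a careful gauge-fixing, and one must then verify that the combined YMD-plus-gauge system is a genuine elliptic boundary value problem with real-analytic coefficients and data so that Morrey--Nirenberg-type theorems apply up to the boundary. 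This is where most of the technical work would lie.
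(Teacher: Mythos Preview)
Your opening moves match the paper: pass to $A_n = B_n = 0$ gauges on a collar and use Theorem \ref{boundary determination} to force the Taylor series of $A$ and $B$ to coincide on $\d M$ (the paper also checks that the Taylor series of $\phi$ and $\psi$ then agree, via \eqref{YMD-2 spinor} and $\Lambda_A = \Lambda_B$, a step you omit). But your near-boundary argument diverges from the paper and runs into trouble. Invoking Morrey--Nirenberg would require $\phi|_{\d M}$ to be real-analytic, which is nowhere assumed, and would also require formulating the coupled YMD system as a genuine elliptic boundary value problem --- the very obstacle you flag. The paper sidesteps both issues: it applies Cauchy--Kovalevskaya only to the \emph{gauge-fixing} equation $d^*\!\left(e^{-S}Ae^{S}+e^{-S}d(e^S)\right)=0$ with zero Cauchy data, producing real-analytic Coulomb gauges $e^S,e^T$ for $A,B$ near a boundary point. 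In these gauges, \eqref{YMD-2 spinor}--\eqref{YMD-2 connection} together with $dd^*A'=0$ form an elliptic system with scalar principal part, and one checks that $S,T$ (hence $(A',\phi')$ and $(B',\psi')$) have equal Taylor series on $\d M$. Equality on a half-neighbourhood then follows from the unique continuation principle for such systems --- no boundary regularity theorem is needed, and no real-analyticity of $\phi$ up to $\d M$ is ever asserted.

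Your interior step has a genuine gap. What the previous step hands you is a gauge transformation $G$, defined only on a collar, with $G\cdot B = A$ there; the object that must be continued to an interior point $p$ is $G$, not the relation ``$A=B$ in the chosen gauge'', which has no meaning where $G$ is undefined. Real-analyticity of $A$ and $B$ alone does not extend $G$ along a path. The paper builds the extension explicitly: along a path $\beta$ from $p$ to the collar, Lemma \ref{cekic paths} supplies $\Dir_A^2$-harmonic sections $\vp_i$ that frame $S\otimes E$ along $\beta$; solving $\Dir_B^2\psi_i=m^2\psi_i$ with $\psi_i|_{\d M}=\vp_i|_{\d M}$ and setting $H_0=\sum_j\psi_j\otimes\vp_j^\flat$ gives a real-analytic endomorphism on a tube about $\beta$. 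Using $\Lambda_A=\Lambda_B$ and unique continuation one shows $H_0=\id_S\otimes G$ near $\d M$; then the partial trace $H=(\rank S)^{-1}\Tr_S H_0$ is unitary there, and real-analyticity propagates the identities $H^*H=\id$ and $B=H^{-1}AH+H^{-1}dH$ along the whole tube. This construction is what your ``analytic continuation along overlapping charts'' would have to become.
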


\begin{proof}
    We work in a real-analytic trivialization over an open set $U$ intersecting the boundary $\d M$. We can find a smooth global gauge transformation $F$ satisfying
    \begin{equation}
    \begin{cases}
     \d_n F = - A_n F \ \text{in\ } U \\
     F|_{\d M} = \id,
    \end{cases}
\end{equation}
which is moreover real-analytic in $U$, since $A_n$ is real-analytic there. By applying this gauge transformation $F$, we get a pair $(A', \phi')$ that is real-analytic in $U$, such that $A_n' = 0$ in the local trivialization over $U$. Moreover, since $F|_{\d M} = \id$, it follows that $\Lambda_{A'} = \Lambda_A$. Therefore, doing the same for $B$, we may assume without loss of generality that $A$ and $B$ are smooth connections, satisfying $A_n = B_n = 0$ in the local trivialization over $U$, and real-analytic there. 

Since $\Lambda_A = \Lambda_B$, Theorem \ref{boundary determination}, implies that in this local trivialization we have
\begin{equation}\label{normal derivatives of A and B}
    \d_n^k(A - B) \big|_{\d M}= 0, \ \ k \geq 0.
\end{equation}
That is, $A$ and $B$ have the same Taylor series at the boundary in this local trivialization.

We want to extend this observation to $\phi$ and $\psi$. Note that by assumption $\phi|_{\d M} = \psi|_{\d M}$. Using this fact, as well as the fact that $A_n = B_n = 0$ in $U$, the equality $\Lambda_A = \Lambda_B$ yields
\begin{equation}
    \d_n (\phi - \psi)|_{\d M} = \left( \nabla^A_n \phi - \nabla^B_n \psi \right)|_{\d M} = \Lambda_A\left( \phi|_{\d M} \right) - \Lambda_B\left( \psi|_{\d M} \right) = 0.
\end{equation}
So $\d_n(\phi - \psi)|_{\d M} = 0$ in this local trivialization. Moreover, \eqref{YMD-2 spinor} yields
\begin{equation}
    \d_n^2(\phi - \psi)|_{\d M} = \left( \Dir_A^2 \phi - \Dir_B^2 \psi \right)|_{\d M} - \left( \phi - \psi \right)|_{\d M} = 0,
\end{equation}
where we again have used $\d_n (\phi - \psi)|_{\d M} = 0$, $(\phi - \psi)|_{\d M} = 0$, and $\d_n^k(A - B)|_{\d M} = 0$ for all $k$. By taking derivatives of \eqref{YMD-2 spinor}, similar arguments yield $\d_n^k(\phi - \psi)|_{\d M} = 0$.

Now we want to make a local gauge transformation of $A$ in $U$ so that the new connection $A'$ satisfies $d^* A' = 0$. The key ingredient in this step is the Cauchy--Kovalevskaya theorem; this is one place where the real-analyticity hypothesis plays a crucial role. To this end, we recall the following well-known result; see \cite[Theorem 5.4]{Hall2003}, for example:

\begin{lemma}\label{lem d(e^S)}
Let $\fg$ be a matrix Lie algebra. Then for $S : \R^n \to \fg$, we have
\begin{equation}\label{d(e^S)}
    d(e^S) = e^S \left( \frac{1 - e^{-\ad{S}}}{\ad{S}} \right)(dS)
\end{equation}
where $\ad{S} \in \End{\fg}$ is the endomorphism $X \mapsto [S,X]$.
\end{lemma}

For $S \in \fu(N)$, let us denote the endomorphism in brackets acting on $dS$ in \eqref{d(e^S)} by $\Theta(S)$. Note that $\Theta(S)$ is defined by a power series in $\ad{S}$ and is therefore real-analytic in $S$. Moreover, note that $\Theta(0) = \id$, and therefore $\Theta(S)$ is invertible for small values of $S$.

Now, let us consider the following Cauchy problem for $S : U \to \fu(N)$,
\begin{equation}\label{CK system 1}
    \begin{cases}
    d^*\left(e^{-S} A e^S + e^{-S} d(e^S)\right) = 0, \\
    S|_{U \cap \d M} = 0, \\
    \d_n S |_{U \cap \d M} = 0.
    \end{cases}
\end{equation}
Using Lemma \ref{lem d(e^S)}, Equation \eqref{CK system 1} can be written in the form
\begin{equation}\label{CK system}
    \begin{cases}
    -g^{ij} \nabla_i \nabla_j S = \Theta(S)^{-1} F(x, S, dS ; A) \\
    S|_{U \cap \d M} = 0, \\
    \d_n S |_{U \cap \d M} = 0,
    \end{cases}
\end{equation}
where 
\begin{align}\label{expression for F}
    F(x,S,dS; A) &= g^{ij} e^{-S} \Theta(-S)(\d_i S) A_j e^S + g^{ij} e^{-S} \left(\nabla_i A_j \right) e^S + g^{ij} e^{-S} A_j e^S \Theta(S)(\d_i S)  \nonumber \\
    &\ \ \ \ \ \ \ \ \ \ \ \ \ \ \ \ \ \ \ \ \ \ \ \ \ \ \ \ \ \ \ \ \ \ \ \ \ \ \ \ \ \ \ \ \ \ \ \ \ \ \ \ \ \ \ \ \ \ \ \ \ \ \ \ \ \ \ \ + g^{ij}(D \Theta)(S)(\d_i S, \d_j S). 
\end{align}
Note that since $S \mapsto \Theta(S)$ is real-analytic near $S = 0$, as are the connection $A$ and metric $g$, the function $F(x,S,dS;A)$ is real-analytic in a neighbourhood of $(x, 0, 0; A)$ for $x \in U \cap \d M$. Therefore, since $\Theta(S)^{-1}$ is well-defined and real-analytic in a neighbourhood of $S = 0$, the Cauchy--Kovalevskaya theorem (see \cite[\S 16.4]{Taylor2011} for example) yields, after possibly shrinking $U$, the existence of a real-analytic solution $S : U \to \fu(N)$ to \eqref{CK system}. 

Therefore, $e^S$ yields a local gauge transformation over $U$. Applying this gauge transformation to $A$, we get a new connection $A' := S^{-1} A S + S^{-1} dS$ over $U$, which satisfies $d^* A' = 0$. Moreover, since the Yang--Mills--Dirac equations \eqref{YMD-2 spinor}--\eqref{YMD-2 connection} are gauge-invariant, the pair $(A', \phi')$, where $\phi' := e^{-S} \phi$, continues to satisfy the Yang--Mills--Dirac system. In particular, the pair $(A',\phi')$ satisfies the following nonlinear elliptic system over $U$,
\begin{equation}\label{elliptic system}
    \begin{cases}
    \Dir_{A'}^2 \phi' = m^2 \phi', \\
    d_{A'}^* F_{A'} = J(\phi'), \\
    dd^*A' = 0.
    \end{cases}
\end{equation}
In the same manner, we may conclude, after possibly shrinking $U$ again, that there exists a function $T : U \to \fu(N)$ such that the connection $B'$ defined by $B' := T^{-1} B T + T^{-1} dT$, and the spinor $\psi' := e^{-T} \psi$, also satisfy the nonlinear elliptic system \eqref{elliptic system} over $U$.

We now want to show that $S$ and $T$ have the same Taylor series at the boundary. To this end, note that since $\Theta(0) = \id$, Equation \eqref{expression for F} yields $\Theta^{-1}(0)F(x, 0, 0; A) = d^*A(x)$ for any $x \in \d M$. Furthermore, since $S|_{\d M} = T|_{\d M} = 0$ and $\d_n S |_{\d M} = \d_n T|_{\d M} = 0$, it follows that $dS|_{\d M} = dT|_{\d M} = 0$. These observations, along with \eqref{CK system} give us
\begin{equation}
    \d_n^2 (S - T)|_{\d M} = \left( \Delta S - \Delta T \right)|_{\d M} = \left( F(\cdot, 0, 0; A) - F(\cdot, 0, 0' B) \right)|_{\d M} = \left( d^*A - d^*B \right)|_{\d M} = 0,  
\end{equation}
the last equality holding since $A$ and $B$ have the same Taylor series at the boundary. By taking derivatives of \eqref{CK system} and using $\d_n^k (A - B)|_{\d M} = 0$ for all $k$, we get $\d_n^k(T - S) = 0$ for all $k$. Therefore, it follows that $\d_n^k(A' - B')|_{\d M} = 0$ and $\d_n^k(\phi' - \psi')|_{\d M} = 0$ for all $k \geq 0$. Since $(A',\phi')$ and $(B',\psi')$ both satisfy \eqref{elliptic system} in $U$, and have equal Taylor series at the boundary, it follows from the unique continuation principle for elliptic systems with scalar principal part (see \cite[Theorem 3.5.2]{Isakov2017} for example) that $(A',\phi') = (B',\psi')$ in $U$. In particular, this shows that $(A,\phi)$ is gauge equivalent to $(B, \psi)$ over $U$, by a gauge transformation that is equal to the identity on $\d M$.

We have so far shown that the original pairs $(A,\phi)$ and $(B,\psi)$ are locally gauge equivalent near the boundary $\d M$ by a real-analytic gauge transformation. We now want to extend this to the interior of $M$. For this final step, we use the real-analyticity of $A$ and $B$ in the interior as follows. 

First, we have the following lemma from \cite{Ceki2020}, which is stated there for the case $\cL = d_A^*d_A$ on some Hermitian vector bundle $E$. The proof easily extends to the case $\cL = \Dir_A^2 - m^2$ presently under consideration, provided of course that $m^2 \notin \Spec{\Dir_A^2}.$ 

\begin{lemma}\label{cekic paths}
Let $\beta \sub M^{\mathrm{int}}$ be an embedding of $[0,1]$ into $M$. Then there exists smooth sections $\vp_1, \dots, \vp_m$, harmonic with respect to $\cL$, and having $\supp{\left( \vp_i|_{\d M} \right)} \sub \Gamma$ for some given non-empty open subset $\Gamma \sub \d M$, that form a frame for $E \otimes S$ at every point along $\beta$. 
\end{lemma}

\begin{remark}
The proof involves solving the Dirichlet problem for $\cL$, and then extending the resulting sections to global ones. See \cite[Lemma 6.1]{Ceki2020} for details.
\end{remark}

Armed with Lemma \ref{cekic paths}, we fix a point $x \in M$, and let $\beta$ be a path from $x$ to a point $y$ near $\d M$ such that $A$ and $B$ are locally gauge equivalent near $y$. We consider a tubular neighbourhood $W$ of $\beta$, over which $S \otimes E$ is trivial. Since $A$ is real-analytic, the harmonic sections $\vp_i$ given by Lemma \ref{cekic paths} are real-analytic. Therefore, they provide a real-analytic trivialization of $S \otimes E$ over $U$.

Now let $\psi_i$ be the solution to
\begin{equation} \label{dirichlet problem for frame}
    \begin{cases}
            \Dir_B^2 \psi_i = m^2 \psi_i \\
            \psi_i|_{\d M} = \vp_i|_{\d M}.
    \end{cases}
\end{equation}
Since $B$ is real-analytic, so are the $\psi_i$. Let us construct the endomorphism
\begin{equation}\label{defn of h}
    H_0 = \sum_j \psi_j \otimes \vp_j^{\flat}
\end{equation}
where $\vp_j^{\flat}$ indicates the dual frame. Then $H_0$ is real-analytic in $W$. Moreover, near $y$, we know that there exists a real-analytic gauge transformation $G$ taking $A$ to $B$, satisfying $G|_{\d M} = \id$. Let $G_0 := \id \otimes G$. Then we have that $G_0^{-1} \vp_i = \psi_i$ near $\d M$. Indeed, both $G_0^{-1} \vp_i$ and $\psi_i$ satisfy the same elliptic equation,
\begin{equation}
    \Dir_B^2 \left(G_0^{-1} \vp_i \right) = G_0^{-1} \Dir_A^2 \vp_i = m^2 G_0^{-1} \vp_i
\end{equation}
and boundary condition $G_0^{-1} \vp_i|_{\d M} = \psi_i|_{\d M}$. Also, by equality of the Dirichlet-to-Neumann maps, 
\begin{equation}
    \nabla^B_n\left(G_0^{-1} \vp_i\right)|_{\d M} = G_0^{-1} \nabla^A_n \vp_i |_{\d M} = \nabla^B \psi_i |_{\d M}.  
\end{equation}
So again, by unique continuation, we have that $G_0^{-1} \vp_i = \psi_i$ near $\d M$. This is precisely the property of $H$ defined in \eqref{defn of h}. Therefore $H = G_0$ near $\d M$. In particular, taking partial traces, we find that
\begin{equation}\label{partial trace of H}
    G = \frac{1}{\rank{S}} \Tr_S{H_0}.
\end{equation}
Since $G$ is unitary near $\d M$, it follows from real-analyticity that the right-hand of \eqref{partial trace of H} is unitary in $W$. Denoting this right-hand-side by $H$, it follows that $H$ is a unitary automorphism of $E$ over $W$. Moreover, since $B = H^{-1}AH + H^{-1} dH$ near $\d M$, it follows again from real-analyticity that this holds over $W$. This proves that $A$ and $B$ are gauge equivalent about any point in $M$, and are thus locally gauge equivalent.
\end{proof}

\begin{corollary}\label{corollary abelian}
Let $A$ and $B$ be as above. If $rank{E} = 1$, then $A$ and $B$ are globally gauge equivalent via a gauge transformation $G$ satisfying $G|_{\d M} = 1$.
\end{corollary}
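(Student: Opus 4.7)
The plan is to upgrade the local gauge equivalence from Theorem \ref{main thm} to a global one, exploiting the rigidity of $U(1)$-gauges: if $G_1, G_2 : U \to U(1)$ both intertwine $\nabla^A$ and $\nabla^B$, then using $G_i^{-1} dG_i = B - A$ and commutativity of scalars, one computes $d(G_1 G_2^{-1}) = 0$, so $G_1 G_2^{-1}$ is locally constant on $U$.

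The first step is to patch the near-boundary gauges. The proof of Theorem \ref{main thm} produces, for each $p \in \partial M$, a real-analytic gauge transformation $G_p : U_p \to U(1)$ with $G_p|_{\partial M \cap U_p} = 1$ intertwining $\nabla^A$ and $\nabla^B$. After shrinking so that the pairwise overlaps $U_p \cap U_q$ are connected and meet $\partial M$, the rigidity above forces $G_p G_q^{-1} \equiv 1$ on overlaps, since the locally constant value on the relevant component is pinned down by $G_p|_{\partial M} = G_q|_{\partial M} = 1$. Compactness of $\partial M$ then yields a single real-analytic $G : U \to U(1)$ on a collar neighborhood of $\partial M$ with $G|_{\partial M} = 1$.

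To extend $G$ to $M^{\mathrm{int}}$, I would invoke the harmonic-section construction at the end of the proof of Theorem \ref{main thm}. Applying Lemma \ref{cekic paths} to a path $\beta$ in the interior yields harmonic sections $\vp_j, \psi_j$ and a globally defined endomorphism $H_0 := \sum_j \psi_j \otimes \vp_j^\flat$ of $S \otimes E$. Since $N = 1$, the bundle $\End E$ is canonically trivial, so the partial trace $H := \frac{1}{\rank{S}} \Tr_S H_0$ is a genuine scalar function $M \to \mathbb{C}$, real-analytic in $M^{\mathrm{int}}$. The argument there establishes $|H| = 1$, $\nabla^B = H^{-1} \nabla^A H$, and $H = G$ on a tubular neighborhood of $\beta$. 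By real-analyticity on the connected manifold $M^{\mathrm{int}}$, these identities propagate from this non-empty open set to all of $M^{\mathrm{int}}$, so $H : M^{\mathrm{int}} \to U(1)$ is a global gauge transformation taking $A$ to $B$, and it agrees with $G$ on the whole collar by real-analytic continuation.

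Continuity at the boundary then gives $H|_{\partial M} = G|_{\partial M} = 1$, yielding the desired global gauge transformation with the required boundary normalization. The main obstacle is the patching in the first step; for $N > 1$ the ratio $G_p G_q^{-1}$ is merely a gauge automorphism of $A$ rather than a scalar constant, and the non-abelian analogue of the corollary faces a genuine cohomological obstruction not present in the abelian case.
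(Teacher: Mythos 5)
Your near-boundary patching via abelian gauge rigidity (the locally constant ambiguity $G_p G_q^{-1}$ on connected overlaps meeting $\d M$ is pinned to $1$ by the boundary normalization) is correct and is a fine alternative to what the paper does. The genuine gap is in the interior extension. The endomorphism $H_0 = \sum_j \psi_j \otimes \vp_j^\flat$ is \emph{not} globally defined: $\vp_j^\flat$ denotes the \emph{dual frame}, which exists only where the $\vp_j$ are linearly independent, i.e.\ on the tubular neighbourhood $W$ of the one path $\beta$ to which Lemma~\ref{cekic paths} was applied. (Triviality of $\End E$ for $N=1$ does not help; $H_0$ is a section of $\End(S \otimes E)$ whose very definition requires the frame.) Thus $H = \tfrac{1}{\rank S}\Tr_S H_0$ lives on $W$, not on $M^{\mathrm{int}}$, and there is nothing to ``propagate by real-analyticity'' to the rest of the interior. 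Also, the proof of Theorem~\ref{main thm} establishes $H = G$ only near the boundary end of $\beta$, not on a full tubular neighbourhood of $\beta$. If one repairs this by running the construction for every interior point $x$ to get $H_x$ on $W_x$, rigidity only says $H_x/H_{x'}$ is locally constant on overlaps; you have no mechanism to pin down these constants when $W_x \cap W_{x'}$ does not reach the collar where $H=G$, and path-dependent continuation can produce genuine monodromy when $M^{\mathrm{int}}$ is not simply connected.

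The paper sidesteps all of this by using the coupled spinor, which is the one global anchor available. Each local gauge $f$ produced in the proof of Theorem~\ref{main thm} also intertwines the spinors, $f\phi = \psi$; since $N=1$ this forces $f = \psi/\phi$ wherever $\phi \neq 0$, and the non-vanishing set of $\phi$ is dense because $\phi$ is a real-analytic solution of the elliptic equation $\Dir_A^2 \phi = m^2 \phi$. As $\phi$ and $\psi$ are single-valued global sections, the local gauges are automatically restrictions of one global $S^1$-valued function --- no constant ambiguity, no monodromy --- whose boundary value is $1$ by the near-boundary analysis. Replacing your real-analytic continuation step with this observation closes the gap and, as a bonus, makes the separate near-boundary patching argument unnecessary.
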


\begin{proof}
In the case $N = 1$, global gauge transformations are elements of $C^\8(M, S^1)$. The same arguments use in preceding proof show that $(A,\phi)$ is locally gauge equivalent to $(B, \psi)$ everywhere in $M$. That is, about every point in $M$ there exists a locally defined smooth $S^1$-valued function satisfying $f \phi = \psi$. Since $\phi$ and $\psi$ are solutions to elliptic equations, the set on which they do not vanish is dense. It thus follows that one can patch these local $S^1$-valued functions to obtain a global well-defined $S^1$-valued function, which is by definition a gauge transformation from $A$ to $B$, and equal to $1$ on $\d M$.
\end{proof}

\begin{remark}
The ease with which we obtain the preceding corollary can be attributed to the fact that Theorem \ref{main theorem} yields not only a gauge transformation between the connections, but also between the spinors, on which the gauge action takes a particularly simple form when $N = 1$.
\end{remark}

Here, as well as in the proof of Theorem \ref{main theorem}, we have seen how the introduction of a spinor field coupled to a unitary connection affects the recovery of the connection up to gauge from the Dirichlet-to-Neumann map of its twisted Dirac Laplacian. This motivates the following question: in what ways does the introduction of a spin structure affect the recovery of other geometric structures from boundary data associated to a Dirac Laplacian. For instance, regarding the Calder\'{o}n problem for the metric, it would be worthwhile to study if the spin structure can be exploited to facilitate the recovery of the metric up to isometry, even in dimension $2$, from the Dirichlet-to-Neumann map of the Dirac Laplacian.

\section*{Acknowledgements}

The author wishes to acknowledge financial support from the Natural Sciences and Engineering Research Council of Canada in the form of a Canada Graduate Scholarship for Doctoral studies, as well as Professor Niky Kamran for his helpful suggestions and comments.

\addcontentsline{toc}{section}{References}

\end{document}